\theoremstyle{plain}
\newtheorem{thm}{Theorem}[section]
\newtheorem{prop}[thm]{Proposition}
\newtheorem{cor}[thm]{Corollary}
\newtheorem{lem}[thm]{Lemma}
\newtheorem{dfn}[thm]{Definition}
\newtheorem{rmk}[thm]{Remark}
\newcommand{\bQ}{\overline{\mathbb{Q}}}
\newcommand{\bF}{\overline{\mathbb{F}}}
\newcommand{\C}{\mathbb{C}}
\newcommand{\Q}{\mathbb{Q}}
\newcommand{\Z}{\mathbb{Z}}
\newcommand{\F}{\mathbb{F}}
\newcommand{\GSp}{{\rm GSp}}
\newcommand{\lra}{\longrightarrow}
\newcommand{\E}{\mathcal{E}}
\renewcommand{\O}{\mathcal{O}}
\newcommand{\br}{\overline{\rho}}
\newcommand{\ds}{\displaystyle}
\newcommand{\bM}{\overline{M}}
\newcommand{\uk}{\underline{k}}
\newcommand{\w}{\omega}
\newcommand{\ochi}{\overline{\chi}}
\newcommand{\op}{\overline{\psi}}
\newcommand{\ot}{\overline{\tau}}
\newcommand{\Gp}{G_{\Q_p}}
\newcommand{\ve}{\overline{\varepsilon}}
\newcommand{\cyc}{{\rm Cyc}}
\title[A new theta cycle for $GSp_4$ and an Edixhoven type theorem]
{A new theta cycle for $GSp_4$ and an Edixhoven type theorem}
\author{Takuya Yamauchi}
\keywords{mod $p$ Siegel modular forms, theta cycles}
\thanks{During this research the author
was partially supported by JSPS KAKENHI Grant Number (B) No.19H01778 until the end of March 2024.}
\subjclass[2010]{11F, 11F33, 11F80}
\address{Takuya Yamauchi \\ 
Mathematical Inst. Tohoku Univ.\\
 6-3,Aoba, Aramaki, Aoba-Ku, Sendai 980-8578, JAPAN}
\email{takuya.yamauchi.c3@tohoku.ac.jp}
\begin{document}

\maketitle

\begin{abstract}
In this paper, we investigate a new theta cycle for $GSp_4/\Q$ by using 
author's theta operators defined in the previous work. 
In the course of the construction, we also modify the theta operators so that they work in any characteristic, including $p = 2$, and for any weight. As an application, we discuss 
an Edixhoven type theorem for $GSp_4/\Q$. 
\end{abstract}


\section{Introduction}
Mod $p$ modular forms in a broad sense play an important role in  
the arithmetic study of Galois representations (cf. \cite{serre-c}, \cite{serre}, and \cite{Edix} 
among others). 
After Serre-Katz-Jochnowitz's works (cf. \cite{joch} and \cite{katz}), 
several people have studied a variant of the theta operator which is 
a mod $p$ analogue of the Maass-Shimura differential operator 
(or the Ramanujan-Serre differential in the case of elliptic modular forms).  
We refer the reader to \cite{yam} and \cite{Ortiz}. We also highlight \cite{EFGMM} as a particularly relevant reference on this topic, which includes a comprehensive introduction.

In this paper we recast the theta operators for $GSp_4/\Q$ constructed in \cite{yam} and 
construct a new ``big'' theta operator acting on any geometric Siegel 
modular forms over $\bF_p$ of degree 2. A novelty is that we allow any weight and any characteristic of the base field.   
Let us fix some notation to explain the main results and refer to the appropriate sections for 
details. 
Let $p$ be any prime including $2$ and $N\ge 3$ be an integer with $p\nmid N$. 
For each pair $\uk=(k_1,k_2)$ of integers with $k_1\ge k_2$ where we allow $k_2$ negative, we denote by 
$M_{\uk}(N,\bF_p)$ be the space of the geometric (Siegel) modular forms over $\bF_p$ of 
weight $\uk$ with respect to the principal congruence subgroup $K(N)\subset \GSp_4(\widehat{\Z})$ (cf. \cite[Section 2.5]{yam}). Put 
$$m_{\uk}:=\left\{\begin{array}{ll}
(0,0) & \text{if $p=2$,} \\
(p-1,p-1) & \text{if $p>2$ and $k_1-k_2\le 1$,}\\
(2p-2,2p-2)  & \text{if $p>2$ and $k_1-k_2> 1$.}
\end{array}\right.
$$
Our first main result is the following (see Theorem \ref{extension1} and Theorem \ref{theta3}):
\begin{thm}\label{main1}
There is an $\bF_p$-linear operator 
$\Theta_{\uk}:M_{\uk}(N,\bF_p)\lra M_{\uk+(2,2)+m_{\uk}}(N,\bF_p)$ satisfying the properties below:
\begin{enumerate}
\item if $f\in M_{\uk}(N,\bF_p)$ is a Hecke eigenform outside $pN$ 
$($outside $N$ if $k_2\ge 2)$, then so is 
$\Theta_{\uk}(f)$;
\item if $f\in M_{\uk}(N,\bF_p)$ is a Hecke eigen cusp form outside $pN$ and 
$\Theta_{\uk}(f)$ is not identically zero, then $\br_{\Theta_{\uk}(f),p}\simeq 
\overline{\chi}^2_p\otimes \br_{f,p}
$ for the corresponding mod $p$ Galois representations 
$($cf. \cite{wei1},\cite{wei},\cite{wei2}$)$ of $G_\Q:={\rm Gal}(\bQ/\Q)$. Here 
$\overline{\chi}_p$ stands for the mod $p$ cyclotomic character of $G_\Q$. 
\end{enumerate}
\end{thm}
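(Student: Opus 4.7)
The plan is to construct $\Theta_{\uk}$ geometrically as a modification of the Maass--Shimura-type differential operator studied in \cite{yam}, and then read off both the Hecke equivariance and the Galois-theoretic conclusion from its effect on Fourier expansions at a standard cusp. Concretely, I would work on the Siegel threefold $\mA_2(N)_{\bF_p}$, where weight-$\uk$ forms are sections of $\mathrm{Sym}^{k_1-k_2}\E \otimes (\det\E)^{k_2}$. The Gauss--Manin connection together with the Kodaira--Spencer isomorphism supplies, in characteristic zero, a differential operator raising the weight by $(2,2)$; in characteristic $p$ this is only regular on the ordinary locus. To extend it across the non-ordinary stratum I would multiply the output by suitable (partial) Hasse invariants. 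This accounts for the three cases defining $m_{\uk}$: for $p>2$ and $k_1-k_2\le 1$ a single factor of the Hasse invariant of weight $(p-1,p-1)$ suffices; for $k_1-k_2>1$ the $\mathrm{Sym}$-part contributes extra poles along the non-ordinary stratum that force a second such factor, yielding the shift $(2p-2,2p-2)$; and for $p=2$ one works directly with the formal $q$-expansion ring / Serre--Tate parameters, where no Hasse invariant correction is needed and hence $m_{\uk}=(0,0)$.

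For part (1), I would compute the effect of $\Theta_{\uk}$ on the Fourier expansion at a cusp (where a Siegel form is recorded by coefficients $a(T)$ indexed by semi-integral symmetric $T$) and verify that for every prime $\ell\nmid pN$ (resp.\ $\ell\nmid N$ when $k_2\ge 2$, in which case $f$ extends as a genuine section across the toroidal boundary and the $p$-Hecke operators behave classically) the Hecke operators $T_\ell$ and $T_{\ell^2}$ commute with $\Theta_{\uk}$ up to explicit scalars; morally, $\Theta_{\uk}$ is a derivation in the Serre--Tate coordinates on which $\mathrm{Frob}_\ell$ acts by $\ell$. Eigenform preservation is a direct consequence.

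For part (2), I would invoke Chebotarev density on characteristic polynomials of Frobenius. The scalars from the commutation in the previous step force the $T_\ell$- and $T_{\ell^2}$-eigenvalues of $\Theta_{\uk}(f)$ to differ from those of $f$ by factors $\ell^2$ and $\ell^4$ respectively, matching exactly the characteristic polynomial of $\mathrm{Frob}_\ell$ on $\ochi_p^2 \otimes \br_{f,p}$ (since twisting by $\ochi_p^2$ multiplies Frobenius eigenvalues by $\ell$-squared). Combining this with the existence of $\br_{\Theta_{\uk}(f),p}$ from \cite{wei1,wei,wei2} identifies its semisimplification.

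The main obstacle is the construction itself, and especially the case $p=2$: the Hasse-invariant geometry used in \cite{yam} degenerates there, and $m_{\uk}=(0,0)$ strongly suggests that a distinct, more formal approach is required. A further delicate point is to check, in the range $k_1-k_2>1$, that the partial Hasse invariant correction lands in precisely the asserted weight with no hidden twist, and to confirm that the resulting product is a genuinely holomorphic section of $\mathrm{Sym}^{k_1-k_2+?}\E\otimes(\det\E)^{?}$ rather than merely meromorphic along the non-ordinary stratum.
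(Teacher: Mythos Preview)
Your overall strategy aligns with the paper's: build a pre-theta operator $\widetilde{\Theta}_{\uk}$ on the ordinary locus from the Gauss--Manin connection, Kodaira--Spencer, and the unit-root splitting, then clear poles with powers of $H_{p-1}$; the Hecke and Galois assertions then follow from the effect on $q$-expansions. Two refinements are worth noting. First, the paper applies ${\rm KS}^{-1}\circ\nabla$ \emph{twice} and then collapses via a non-canonical projection $p_1:{\rm Sym}^2\E\otimes_{\O_{S_{N,p}}}{\rm Sym}^2\E\to\w^{\otimes 2}$; this double application is the source of the $(2,2)$-shift and the $\ochi^2_p$-twist. Second, the trichotomy in $m_{\uk}$ is not governed by abstract stratum geometry or ``partial'' Hasse invariants but by an explicit local formula expressing each coefficient of $\widetilde{\Theta}_{\uk}(F)$ as a polynomial in the entries $c_{ij}$ of $C=BA^{-1}$, with $A$ the Hasse matrix. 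Since $H_{p-1}\cdot c_{ij}$ is regular, one simply reads off the $c_{ij}$-degree: for $r=k_1-k_2\in\{0,1\}$ the would-be quadratic terms such as $(n+1)(n+2)c_{11}^2F_{n+2}$ and $(r+1-n)(r+2-n)c_{22}^2F_{n-2}$ vanish for index reasons, leaving only linear $c_{ij}$-terms, so a single $H_{p-1}$ suffices; for $r\ge 2$ genuine quadratic terms survive and force $H_{p-1}^2$.

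The real gap is your treatment of $p=2$. You anticipate that a separate, more formal construction is required, but the paper uses the \emph{same} geometric operator and the same local formula. Inspecting that formula, every monomial containing a $c_{ij}$ carries an even integer coefficient: either an explicit factor $2$, or a product of consecutive integers such as $(n+1)(n+2)$, or the combination $(r-1)r-6(r-n)n$. Hence modulo $2$ the operator reduces to the pure differential piece $4\nabla_{11}\nabla_{22}(F_n)-\nabla_{12}^2(F_n)\equiv\nabla_{12}^2(F_n)$, which is everywhere regular; there are no poles to clear, and $m_{\uk}=(0,0)$ falls out. Pursuing an alternative $q$-expansion-only construction might define \emph{some} operator, but reconciling it with the geometric one for $p>2$ and pinning down the target weight would be harder than computing the local formula once, uniformly in $p$.
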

A motivation to construct $\Theta_{\uk}$ is to study the filtration of a non-zero element $f\in M_{\uk}(N,\bF_p)$ 
which is defined by 
\begin{equation}\label{w(f)}
w(f):=\min\{\uk-i(p-1,p-1)\in \Z^2\ |\ f\in (H_{p-1})^i\cdot M_{\uk-i(p-1,p-1)}(N,\bF_p)\}
\end{equation}
with respect to the lexicographic order on $\Z^2$ so that the second entries are firstly compared.  
Here $H_{p-1}$ is the Hasse invariant of degree 2 and it can be regarded as 
a geometric Siegel modular form over $\bF_p$ of parallel weight $(p-1,p-1)$ with level one. 
The filtration $w(f)$ of $f$ is well-defined since the zero locus of $H_{p-1}$ is an irreducible  divisor of 
the Siegel 3-fold $S_{N,p}$ (cf. \cite[p.349, Corollary (1.5)]{Oort}). 
Since the Hasse invariant has a scalar weight, it is natural to consider the construction of a differential operator that increases the weight by a scalar amount and also interacts with Hecke eigenvalues. In the author's previous work \cite{yam}, this construction was carried out only in the case $k_1=k_2$, under the assumption $p\ge 5$. In contrast our big theta $\Theta_{\uk}$ works for any weight 
$\uk=(k_1,k_2)$ with $k_1\ge k_2$ and any prime $p$. 

For each $f\in M_{\uk}(N,\bF_p)$, the theta cycle of $f$ with respect to $\Theta=\Theta_{\uk}$ 
is defined by 
$$\cyc(f)=
\left\{
\begin{array}{ll}
(w(\Theta(f)),w(\Theta^2(f)),\ldots,w(\Theta^{\frac{p-1}{2}}(f))) & \text{if $p>2$}\\
(w(\Theta(f)))  & \text{if $p=2$.}
\end{array}\right.
$$
We remark that $\Theta^i(f)\neq 0$ for any positive integer $i$ if $f$ is not weakly $p$-singular (see Definition \ref{p-singular} and Theorem \ref{non-vanishing}).
Combining with the automorphy lifting theorems due to Gee-Geraghyty \cite{gg} which 
are extended by the author \cite{yam2} we prove the following:
\begin{thm}\label{main2}
Let $\br:G_\Q\lra \GSp_4(\bF_p)$ be a mod $p$ Galois representation satisfying 
\begin{itemize}
\item $p\ge 3$; 
\item $\br|_{G_{\Q(\zeta_p)}}$ is irreducible and ${\rm Im}(\br)$ is adequate; 
\item $\br\simeq \br_{f,p}$ for some cuspidal Hecke eigenform $f$ in $M_{\uk'}(N,\bF_p)$ 
with $\uk'=(k'_1,k'_2),\ k'_1\ge k'_2\ge 3$ and $N\ge 3$ satisfying $p\nmid N$.  
\end{itemize}
Then there exist a cuspidal Hecke eigen form $g$ in  $M_{(k_1(\br),k_2(\br))}(N,\bF_p)$ 
of the classical Serre weight $(k_1(\br),k_2(\br))$ with the integer $w(\br)$ defined in Section \ref{CSW1}  
such that $\br\simeq \ochi^{w(\br)}_p\otimes\br_{g,p}$. Further, if $g$ is not weakly $p$-singular, then 
$$\br\simeq\left\{
\begin{array}{ll}
\br_{\Theta^j(g),p} & \text{if $w(\br)\equiv 2j$ {\rm mod} $p-1$ 
with $0\le j <\frac{p-1}{2}$,} \\
\br_{\theta_3\circ \Theta^j(g),p} & \text{if $w(\br)\equiv 2j+1$ {\rm mod} $p-1$ 
with $0\le j <\frac{p-1}{2}$ and $\Theta^j(g)|_{S_{(0,0)}}\not\equiv 0$} 
\end{array}\right.
$$
where the filtration of $\Theta^j(g)$ appears in the theta cycle ${\rm Cyc}(g)$ of $g$. 
Here $\theta_3$ stands for the small theta operator defined in Section \ref{small-theta} and $S_{(0,0)}$ stands for the 
superspecial locus of $S_{N,p}$. 
\end{thm}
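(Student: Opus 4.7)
The plan is to transplant Edixhoven's original strategy for $\gl_2$ to the $\GSp_4$ setting by combining two essentially independent ingredients: the theta operator machinery of Theorem \ref{main1}, which realizes cyclotomic twists on the Galois side, together with the small theta $\theta_3$; and the Gee--Geraghty--Yamauchi automorphy lifting theorems, which realize $\br$ by a cuspidal Hecke eigenform in the predicted Serre weight.

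First, I would produce $g$ via automorphy lifting. Starting from the given $f \in M_{\uk'}(N,\bF_p)$ with $\br_{f,p} \sim \br$, I would invoke the extensions of the Gee--Geraghty automorphy lifting theorems of \cite{gg} established in \cite{yam2}: the hypotheses on $\br$---irreducibility over $\Q(\zeta_p)$, adequacy of $\mathrm{Im}(\br)$, and the explicit induced shape $\mathrm{Ind}^{\Gp}_{G_{\Q_{p^4}}} \omega_4^a$ of $\br|_{\Gp}$---are precisely those needed to conclude that every element of the predicted set of Serre weights is attained modularly. In particular $\br$ is attained in the classical Serre weight $(k_1(\br),k_2(\br))$ defined in Section \ref{CSW1}, producing the desired cuspidal Hecke eigenform $g$ unramified outside $N$. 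The integer $w(\br)$ is then read off from the local parameters $a_0,a_1,a_2,a_3$ of the induced representation at $p$ as the residue modulo $p-1$ recording the cyclotomic twist needed to pass from $\br_{g,p}$ (of Serre weight) to $\br$ itself, yielding the first assertion $\br \sim \ochi_p^{w(\br)}\otimes \br_{g,p}$.

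Next, I would derive the explicit description in terms of theta iterates under the hypothesis that $g$ is not weakly $p$-singular. By Theorem \ref{main1}(2), each application of the big theta $\Theta$ twists the attached Galois representation by $\ochi_p^2$; by the construction of the small theta in Section \ref{small-theta}, $\theta_3$ twists by $\ochi_p$. Hence the two sequences $\{\br_{\Theta^j(g),p}\}$ and $\{\br_{\theta_3\circ \Theta^j(g),p}\}$ for $0\le j < (p-1)/2$ realize, respectively, the even and odd cyclotomic twists of $\br_{g,p}$ modulo $p-1$. Matching $\br \sim \ochi_p^{w(\br)}\otimes \br_{g,p}$ against these twists via the residue class of $w(\br)$ modulo $p-1$ then yields the two claimed formulas: the parity of $w(\br)$ selects whether $\theta_3$ is needed, and $\lfloor w(\br)/2\rfloor$ prescribes the number of $\Theta$ iterations.

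The hard step is to guarantee that the theta cycle $\cyc(g)$ of $g$ genuinely realizes every residue, i.e., that $\Theta^j(g)\ne 0$ for $0\le j<(p-1)/2$ and that its filtration actually appears in $\cyc(g)$ in the claimed position. This is the $\GSp_4$ analogue of the delicate generic-cycle analysis of Jochnowitz \cite{joch} and Edixhoven \cite{Edix}, and is precisely what the hypothesis that $g$ is not weakly $p$-singular is designed to ensure. I would combine the filtration \eqref{w(f)} with the explicit weight increment $\uk\mapsto \uk+(2,2)+m_{\uk}$ from Theorem \ref{main1} and the vanishing behaviour of $\Theta$ at boundary weights to rule out premature collapse of the cycle, thereby completing the argument.
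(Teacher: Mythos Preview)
Your proposal is correct and mirrors the paper's own argument: the paper uses the case-by-case analysis of Section~\ref{CSW1} to produce a potentially diagonalizable crystalline lift of $\br|_{\Gp}$ with regular Hodge--Tate weights $\{k_1(\br)+k_2(\br)-3+w(\br),\,k_1(\br)-1+w(\br),\,k_2(\br)-2+w(\br),\,w(\br)\}$, then applies Theorem~1.5 of \cite{yam2} to obtain $g$, and defers the second assertion to the theta-cycle material of Section~\ref{the-theta-cycle} exactly as you outline. One minor correction of emphasis: $w(\br)$ is not in general ``read off from $a_0,a_1,a_2,a_3$'' (that pertains only to the irreducible case), but is part of the triple defined case-by-case in Section~\ref{CSW1}; and the distinct-$a_i$ hypothesis enters not as a direct input to the automorphy lifting theorem but as the condition guaranteeing that the potentially diagonalizable crystalline lift exists in the irreducible local case (cf.\ the end of Section~\ref{CSW1}).
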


The classical Serre weights are closely related to an optimal weight of Siegel Hecke eigen cusp forms of level prime to $p$ 
over characteristic zero that give rise to mod $p$ Galois representations equivalent to $\br$. According to
a conjectural mod $p$ local Langlands correspondence (cf.\cite{GHS}), these weights should be interpreted
in terms of the restriction $\br|_{G_{\Q_p}}$. Our definition of the classical Serre weights follows this perspective.

The geometric modular form $g$ in the above claim, in fact, comes from the reduction of 
a geometric modular form of such a weight over a field of characteristic zero since it is obtained by an automorphy lifting 
theorem. 

If $\br|_{G_{\Q_p}}$ is irreducible, it is well-known that it has a 
potentially diagonalizable lift of Hodge-Tate weight $\{0,1,2,3\}$ but it may not be crystalline (cf. \cite[p.410, Lemma 2.1.12]{GHTS}). 
Therefore, if we do not requite the lift to be crystalline, then 
we have a similar result as recorded in Theorem \ref{main3}. 
A theorem of this kind has become well known following \cite{BGGT}, together with \cite{GHTS} and \cite{EG}.

The classical Serre weight for $\br$ is defined in Section \ref{CSW1} for $p>2$ and 
in Section \ref{CSW2} for $p=2$ according to the shape of $\br|_{G_{\Q_p}}$. More precisely, 
assume $\br|_{\Gp}$ is $\ast$-ordinary where $\ast\in\{\text{Borel,Siegel,Klingen}\}$. 
Then the classical Serre weight will be closely related to 
the Hodge-Tate weights of a potentially diagonalizable crystalline lift of $\br|_{\Gp}$ which preserves $\ast$-ordinary. In \cite{yam2}, the classical Serre weights are defined but
not explicit in some cases. Here we give a precise definition as in \cite{serre},\cite{Edix}. 

This paper is organized as follows. 
In Section \ref{theta-operators}, after preliminaries for geometric objects, we construct our theta operators (big theta and small theta) which work for any characteristic and any weights.  
A detailed study of the local behavior of the theta operator is a balk of this paper. 
A new phenomena is observed when we discuss the entire extension of the theta operator. 
The theta cycle is defined by using this operator and basic properties are discussed in 
Section \ref{the-theta-cycle}. In Section \ref{CSW1} and \ref{CSW2}, we give the definition of the classical Serre weights 
for a given $\br$. 
Finally, we give a proof of Theorem \ref{main2} in Section \ref{proof}. 

\textbf{Acknowledgment.} The author would like to thank
Professor B\"ocherer for answering several questions about $p$-singular forms.
He would also like to express special thanks to the referee for carefully reading the manuscript,
pointing out some errors in an earlier version, and providing valuable comments.
In particular, Remark \ref{weights}-(2) was brought to the author's attention by the referee.

\section{Theta operators}\label{theta-operators} 
In this section, we give a modification of the theta operators defined in Section 3 of \cite{yam}, 
so that they work in any characteristic and for any weight.
We refer to Section 2 and 3 of \cite{yam} for the notation.  

We denote by $GSp_4$ the symplectic group with respect to 
$J=\begin{pmatrix}
0_2 & s \\
-s & 0_2
\end{pmatrix},\ s=\begin{pmatrix}
0 & 1 \\
1 & 0
\end{pmatrix}$ with the similitude. 
This is a smooth group scheme over $\Z$. 

\subsection{Geometric modular forms}\label{gmm}
Let $N\ge 3$ be an integer  and  
$S_{K(N)}$ be the Siegel modular threefold over $\Z[1/N]$ with respect to the principal  congruence subgroup $K(N)\subset \GSp_4(\widehat{\Z})$. 
Let $f:\mathcal{A}\lra S_{N}$ be the universal abelian surface and 
we define the Hodge bundle $\E:=f_\ast \Omega^1_{\mathcal{A}/S_{N}}$ which is 
a locally free sheave on $S_{N}$ of rank 2. Put $\w:=\det(\E)$. 
For each pair $\uk=(k_1,k_2)$ of integers with $k_1\ge k_2$ where we allow 
$k_2$ non-positive. 
Put $\E_{\uk}:={\rm Sym}^{k_1-k_2}\E\otimes_{\O_{S_{N,p}}}\w^{k_2}$ 
(this is denoted by $\w_{\uk}$ in \cite{yam}) and for any $\Z[1/N]$-algebra $R$, we define
$M_{\uk}(N,R):=H^0(S_{N}\otimes_{\Z[1/N]}R,\E_{\uk}\otimes_{\Z[1/N]}R)$. 
Each element of  $M_{\uk}(N,R)$ is said to be 
a geometric (Siegel) modular form over $R$ of weight $\uk$ with respect to $K(N)$. 
\begin{rmk}\label{weights}Keep the notation being as above. 
\begin{enumerate} 
\item It is well-known that if $k_2<0$, then $M_{\uk}(N,\C)=0$ {\rm(}cf. \cite{F}{\rm)}.  
\item For any $p$, the space $M_{\uk}(N,\bF_p)$ could be non-zero even if $k_2$ is negative. 
In fact, the Verschiebung map $V$ defined in \cite[p.26, (3.37)]{yam} gives an non-zero element in 
 $M_{(p,-1)}(N,\bF_p)$. Further, for each positive integer $n\ge 1$, a certain component of $V^{\otimes n}$ gives  
 a non-zero element in $M_{(np,-n)}(N,\bF_p)$. For a more conceptual perspective, see \cite[Theorem 5.1.1]{GK}.     
\end{enumerate}
\end{rmk}

\subsection{Gauss-Manin connection}
Let $\mathbb{H}^1_{{\rm dR}}(\mathcal{A}/S_{N})$ be the algebraic de Rham cohomology sheaf on $S_{N}$. 
Let $\nabla:\mathbb{H}^1_{{\rm dR}}(\mathcal{A}/S_{N})
\lra \mathbb{H}^1_{{\rm dR}}(\mathcal{A}/S_{N})\otimes_{\O_{S_{N}}}
\Omega^1_{S_{N}}$ be 
the Gauss-Manin connection. It yields the Kodaira-Spencer isomorphism 
$${\rm KS}:{\rm Sym}^2\E\stackrel{\sim}{\lra}\Omega^1_{S_{N}},\ 
\omega_1\otimes \omega_2\mapsto \langle \omega_1,\nabla\omega_2 \rangle_{{\rm dR}}$$
where $\langle\ast,\ast \rangle_{{\rm dR}}$ stands for the alternating pairing on  
$\mathbb{H}^1_{{\rm dR}}(\mathcal{A}/S_{N})$. 
We remark that the formation of KS is compatible with the base change to any 
$\Z[1/N]$-algebra.
\subsection{A non-canonical projection $p_1$}\label{non-can}
Fix a local basis $e_1,e_2$ of $\E$. Put $u_{i}=e^{i}_1e^{2-i}_2,i=0,1,2$ 
with the convention $e^0_1=e^0_2:=1$ 
 which make up a local basis of 
${\rm Sym}^2\E$. To avoid confusion, we prepare additional symbols  $v_{i}=e^{i}_1e^{2-i}_2,i=0,1,2$ which play the same role. We introduce a non-canonical projection 
\begin{equation}\label{proj}
p_1:{\rm Sym}^2\E\otimes_{\O_{S_{N}}}{\rm Sym}^2\E\lra  
\w^{\otimes 2}
\end{equation}
as ${\rm Aut}_{\O_{S_{N}}}(\E)$-modules.  
This can be given explicitly as follows. 
For any local section $x=\ds\sum_{0\le i,j\le 2}a_{ij}u_i\otimes v_j$ of 
${\rm Sym}^2\E\otimes_{\O_{S_{N}}}{\rm Sym}^2\E$ we define 
\begin{equation}\label{decom-explicit}
p_1(x)=(2a_{20}-a_{11}+2a_{02})(e_1\wedge e_2)^2.  
\end{equation}   
By direct computation, for any local section $\gamma$ of ${\rm Aut}_{\O_{S_{N}}}(\E)$ 
we see $p_1(\gamma x)=\det(\gamma)^2 p_1(x)$.
We also remark that the formation of $p_1$ is compatible with the base change to any 
$\Z[1/N]$-algebra. 
\subsection{The theta operator for the projection $p_1$}\label{tp1}
Let $p$ be any prime including 2  
and $N\ge 3$ be a positive integer with $p\nmid N$. 
Let $S_{N,p}$ be a connected component of the special fiber $S_{K(N)}\otimes_{\Z[1/N]} \bF_p$ at $p$.  
We work locally on the ordinary locus of $S_{N,p}$. 
By abusing notation we use the same symbols $\mathcal{A},\E,\w,\E_{\uk}$ to denote 
their base change to $\bF_p$. 
Then we have a unit root decomposition 
$\mathbb{V}:=\mathbb{H}^1_{{\rm dR}}(\mathcal{A}/S_{N,p})=U\oplus \E$ 
where $U={\rm Im}(F^\ast:\mathbb{V}^{(p)}\lra \mathbb{V})$ is the image of the pullback of the relative Frobenius map $F$ on $\mathcal{A}/S_{N,p}$ (see Section 3.3.1 of \cite{yam}). 
For a pair $\uk=(k_1,k_2)$ of integers  with $k_1\ge k_2$, we define 
$\mathbb{V}_{\uk}:={\rm Sym}^{k_1-k_2}\mathbb{V}\otimes_{\O_{S_{N,p}}} 
(\wedge^2\mathbb{V})^{\otimes k_2}$ where 
$(\wedge^2\mathbb{V})^{\otimes k_2}:={\mathcal{H}om}_{\O_{S_{N,p}}}((\wedge^2\mathbb{V})^{\otimes |k_2|},\O_{S_{N,p}})$ if $k_2$ is negative. 
The unit-root decomposition yields a decomposition 
$\mathbb{V}_{\uk}=\E_{\uk}\oplus R_{\uk}(U)$ and we define 
a natural inclusion 
$\iota_{\uk}:\E_{\uk}\hookrightarrow \mathbb{V}_{\uk}$ with respect to this decomposition.
We are now ready to define our ``pre''-theta operator $\widetilde{\Theta}_{\uk}$ 
and two maps $\Psi^{(i)}_{\uk},\ i=1,2$ by the following commutative diagram:
\begin{equation}\label{scalar-case}
\xymatrix{
\E_{\uk}  \ar[r]^{\iota_{\uk}\hspace{3mm}}  \ar@/_45pt/[dddrrr]_{\widetilde{\Theta}_{\uk}} 
 \ar@/_35pt/[ddrrr]_{\Psi^{(2)}_{\uk}} 
 \ar@/_25pt/[rrr]_{\Psi^{(1)}_{\uk}} & 
\mathbb{V}_{\uk}\ar[r]^{{\rm KS}^{-1}\circ \nabla\hspace{13mm}}&  
\mathbb{V}_{\uk}\otimes_{\O_{S_{N,p}}}{\rm Sym}^2\E 
\ar[r]^{{\rm id}\otimes\iota_{(2,0)}} & \mathbb{V}_{\uk}\otimes_{\O_{S_{N,p}}}
\mathbb{V}_{(2,0)}
\ar[d]^{{\rm KS}^{-1}\circ\nabla}   \\
& & &\ar[d]_{\mod R_{\uk}(U)\otimes_{\O_{S_{N,p}}}R_{(2,0)}(U)}   \mathbb{V}_{\uk}\otimes_{\O_{S_{N,p}}}
\mathbb{V}_{(2,0)}
\otimes_{\O_{S_{N,p}}}{\rm Sym}^2\E  \\
& & &\ar[d]^{{\rm id}\otimes p_1}  \E_{\uk}\otimes_{\O_{S_{N,p}}}({\rm Sym}^2\E
\otimes_{\O_{S_{N,p}}}{\rm Sym}^2\E)\\ 
& & &   \E_{\uk}\otimes_{\O_{S_{N,p}}}\w^2=\E_{\uk+(2,2)}.
} 
\end{equation}  
We shall compute the local behavior of $\widetilde{\Theta}_{\uk}$. 
Fix a local basis $e_1,e_2$ of $\E$. Put $u_{i}=e^{i}_1e^{2-i}_2,i=0,1,2$ 
(resp. $v_{i}=e^{i}_1e^{2-i}_2,i=0,1,2$)
with the convention $e^0_1=e^0_2:=1$ 
 which make up a local basis of the first (resp. second) ${\rm Sym}^2\E$ in the 
 target of $\Psi^{(2)}_{\uk}$.  
We also consider $\delta_n:=e^{k_1-k_2-n}_1e^n_2(e_1\wedge e_2)^{k_2}$ for 
$0\le n\le r:=k_1-k_2$. Then $\{\delta_n\}^r_{n=0}$ makes up a local basis of $\E_{\uk}$. 
For $1\le i\le j\le 2$, let $\nabla_{ij}=\nabla(D_{ij})$ with 
$D_{ij}:=\langle e_i,\nabla(e_j) \rangle_{{\rm dR}}$. 
As in (3.21) of Section 3.3 of \cite{yam}, we consider 
\begin{equation}\label{AB}
\begin{array}{c}
F^\ast((\nabla_{11}(e_1))^{(p)},(\nabla_{22}(e_2))^{(p)})=(e_1,e_2,\nabla_{11}(e_1),\nabla_{22}(e_2))
\left(
\begin{array}{c}
B\\
A
\end{array}
\right),\\
 B=\left(
\begin{array}{cc}
b_{11} & b_{12} \\
b_{21} & b_{22} 
\end{array}
\right),\ 
A=\left(
\begin{array}{cc}
a_{11} & a_{12} \\
a_{21} & a_{22} 
\end{array}
\right)\in M_2(\O_{S_{N,p}})
\end{array}
\end{equation}
and put 
\begin{equation}\label{C}
C=BA^{-1}=\left(
\begin{array}{cc}
c_{11} & c_{12} \\
c_{21} & c_{22} 
\end{array}
\right). 
\end{equation}
Notice that $A$ is nothing but the Hasse matrix of the universal abelian surface 
and it is nowhere vanishing on the ordinary locus of $S_{N,p}$.  
We first compute the local behavior of $\Psi^{(1)}_{\uk}$. 
\begin{prop}\label{phi1}Recall $r=k_1-k_2$. Let $F=\ds\sum_{n=0}^r F_n\delta_n$ be 
a local section of $\E_{\uk}$.  It holds that 
$$\Psi^{(1)}_{\uk}(F)\equiv 
\Bigg(\sum_{n=0}^r b^{(n)}_{2}\delta_n\otimes u_2  \Bigg)+
\Bigg(\sum_{n=0}^r b^{(n)}_{1}\delta_n\otimes u_1  \Bigg)+
\Bigg(\sum_{n=0}^r b^{(n)}_{0}\delta_n\otimes u_0  \Bigg)
$$
modulo $\mathbb{V}_{\uk}\otimes_{\O_{S_{N,p}}}R_{(2,0)}(U)$ where 
\begin{eqnarray}
b^{(n)}_{2}&=&\nabla_{11}(F_n)-(k_1-n)F_n c_{11}-(r+1-n)F_{n-1}c_{12}\nonumber \\
b^{(n)}_{1}&=&\nabla_{12}(F_n)-(n+1)F_{n+1} c_{11}-F_n\{(k_1-n)c_{21}+(k_2+n)c_{12}\}
-(r+1-n)F_{n-1}c_{22} \nonumber\\
b^{(n)}_{0}&=&\nabla_{22}(F_n)-(k_2+n)F_n c_{22}-(n+1)F_{n+1}c_{21}.\nonumber
\end{eqnarray}
\end{prop}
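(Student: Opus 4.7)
The plan is a direct local computation on the ordinary locus in the spirit of Section~3.3 of \cite{yam}, carried out uniformly in weight and in any characteristic. Write $F=\sum_{n=0}^r F_n\delta_n$ with $\delta_n=e_1^{r-n}e_2^n(e_1\wedge e_2)^{k_2}$. Leibniz applied to $\iota_{\uk}(F)\in\mathbb{V}_{\uk}$ gives
$$\nabla F=\sum_n\bigl(dF_n\otimes\delta_n+F_n\,\nabla(\delta_n)\bigr),$$
where a further Leibniz expansion writes $\nabla(\delta_n)$ as the sum of three pieces along $\nabla e_1$, $\nabla e_2$, and $\nabla(e_1\wedge e_2)$ carrying multiplicities $(r-n)$, $n$, and $k_2$. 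Since ${\rm KS}^{-1}$ identifies $\omega_{ij}=\langle e_i,\nabla e_j\rangle_{{\rm dR}}$ with $e_ie_j\in {\rm Sym}^2\E$, the $dF_n$ piece contributes the leading derivatives $\nabla_{11}(F_n)\otimes u_2+\nabla_{12}(F_n)\otimes u_1+\nabla_{22}(F_n)\otimes u_0$, which supply the first summand of each $b_j^{(n)}$.

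The correction terms require the unit-root decomposition $\mathbb{V}=\E\oplus U$. By (\ref{AB}), $F^\ast((\nabla_{ii}(e_i))^{(p)})$ lies in $U$, which forces $a_{1j}[\nabla_{11}(e_1)]_\E+a_{2j}[\nabla_{22}(e_2)]_\E=-(b_{1j}e_1+b_{2j}e_2)$ for $j=1,2$; inverting $A$ via $C=BA^{-1}$ yields $[\nabla_{11}(e_1)]_\E$ and $[\nabla_{22}(e_2)]_\E$ as explicit $c_{ij}$-combinations of $e_1,e_2$. The remaining mixed $\E$-projections $[\nabla_{ij}(e_k)]_\E$ are determined by the same mechanism after expanding in the local frame $(e_1,e_2,\nabla_{11}(e_1),\nabla_{22}(e_2))$ of $\mathbb{V}$, producing further linear expressions in the $c_{ij}$. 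Any $U$-component of $\nabla e_i$ lands in $R_\uk(U)$ and drops out modulo the stated congruence. Substituting these $\E$-projections into the Leibniz expansion and re-expressing each product $e_1^ae_2^b\cdot e_i$ in the basis $\{\delta_m\}_{m=0}^{r}$ shifts the index by $\pm 1$ and brings out the multiplicities $(k_1-n)=(r-n)+k_2$, $(k_2+n)$, $(r+1-n)$, and $(n+1)$ through the combined Leibniz and ``trace'' contributions coming from $k_2\nabla(e_1\wedge e_2)=k_2(\nabla e_1\wedge e_2+e_1\wedge\nabla e_2)$.

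The main obstacle is to perform this bookkeeping integrally, so that the identity persists in characteristic $p=2$. This motivates the choice of the integer basis $u_i=e_1^ie_2^{2-i}$ (rather than a divided-power basis) and the non-canonical projection $p_1$ of (\ref{decom-explicit}) whose coefficients $(2,-1,2)$ avoid inverses of small primes. One must verify that every rational symmetrization factor from ${\rm Sym}^r\E$ is cancelled against a corresponding Leibniz integer; once this is done, collecting the coefficients of $\delta_n\otimes u_2$, $\delta_n\otimes u_1$, and $\delta_n\otimes u_0$ yields the stated formulas for $b_2^{(n)}$, $b_1^{(n)}$, and $b_0^{(n)}$.
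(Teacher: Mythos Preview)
Your approach is essentially the paper's: apply Leibniz to obtain $\Psi^{(1)}_{\uk}(F)=\sum_{n}\sum_{i\le j}(\nabla_{ij}(F_n)\delta_n+F_n\nabla_{ij}(\delta_n))\otimes u_\bullet$ and then insert the $\E_{\uk}$-projection of each $\nabla_{ij}(\delta_n)$ computed from the unit-root splitting and $C=BA^{-1}$; the paper simply invokes formula~(3.35) of \cite{yam} for these projections while you sketch their derivation. Your closing paragraph on integrality and on $p_1$ is extraneous at this stage---the monomial basis $\delta_n=e_1^{r-n}e_2^n(e_1\wedge e_2)^{k_2}$ together with Leibniz already yields integer multiplicities with no symmetrization denominators, and $p_1$ enters only later at the $\widetilde{\Theta}_{\uk}$ step---but the core computation is correct and matches the paper.
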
  
\begin{proof}By definition, we have 
$$\Psi^{(1)}_{\uk}(F)= 
\Bigg(\sum_{n=0}^r (\nabla_{11}(F_n)\delta_n+F_n\nabla_{11}(\delta_n))\otimes u_2  \Bigg)+
\Bigg(\sum_{n=0}^r (\nabla_{12}(F_n)\delta_n+F_n\nabla_{12}(\delta_n))\otimes u_1  \Bigg)$$
$$+\Bigg(\sum_{n=0}^r (\nabla_{22}(F_n)\delta_n+F_n\nabla_{22}(\delta_n))\otimes u_0  \Bigg).
$$
The claim follows from the formula (3.35) of \cite{yam}. 
\end{proof}
Before going further, we need the following lemmas 
for the local sections appearing in (\ref{AB}) and (\ref{C}). 
\begin{lem}\label{detA}
For the local sections appearing in $($\ref{AB}$)$ and $($\ref{C}$)$, it 
holds that 
$$\nabla_{11}(\det (A))=-c_{11}\cdot\det(A),\ \nabla_{12}(\det (A))=
-(c_{12}+c_{21})\cdot\det(A),\ \nabla_{22}(\det (A))=-c_{22}\cdot\det(A)$$
\end{lem}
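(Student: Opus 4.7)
The plan is to derive all three identities by a single direct local computation, which rests on the horizontality of the unit-root sub-bundle $U = \mathrm{Im}(F^\ast)$ for the Gauss--Manin connection in characteristic $p$. Since $F^\ast : \mathbb{V}^{(p)} \to \mathbb{V}$ is horizontal for the connections on both sides and, in characteristic $p$, the pulled-back connection on $\mathbb{V}^{(p)}$ annihilates every canonical section $s^{(p)}$ (because $d(f^p) = 0$), one has $\nabla(F^\ast(s^{(p)})) = 0$ for every local section $s$ of $\mathbb{V}$. Applied to $s = \nabla_{11}(e_1)$ and $s = \nabla_{22}(e_2)$ --- whose Frobenius twists are expanded explicitly by (\ref{AB}) --- this produces two vector equations in $\mathbb{V} \otimes \Omega^1_{S_{N,p}}$ whose coefficients involve $\nabla_{ij}(a_{kl})$ and $\nabla_{ij}(b_{kl})$.

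I would then expand these equations via the Leibniz rule, using the explicit formulas of (3.35) in \cite{yam} for the action of $\nabla_{ij}$ on the basis $e_1, e_2, \nabla_{11}(e_1), \nabla_{22}(e_2)$, and project the resulting identities onto the $\nabla_{11}(e_1)$- and $\nabla_{22}(e_2)$-components (i.e.\ modulo the Hodge part $\E$). This yields matrix identities of the form $\nabla_{ij}(A) = -A \cdot X_{ij}$ for $ij \in \{11, 12, 22\}$, where each $X_{ij}$ is a $2 \times 2$ matrix whose entries are linear combinations of the $c_{kl}$ after substituting $C = BA^{-1}$. A short direct inspection then gives $\operatorname{tr}(X_{11}) = c_{11}$, $\operatorname{tr}(X_{22}) = c_{22}$, and $\operatorname{tr}(X_{12}) = c_{12} + c_{21}$. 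Applying Jacobi's formula $\nabla_{ij}(\det A) = \det(A) \cdot \operatorname{tr}(A^{-1} \nabla_{ij}(A))$ then converts these trace identities into the three asserted formulas simultaneously.

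The main obstacle I anticipate is the mixed case $\nabla_{12}$, where the answer is the symmetric combination $c_{12} + c_{21}$ rather than a single entry of $C$. Tracking precisely how both $c_{12}$ and $c_{21}$ enter the $D_{12}$-direction of the projected equation requires careful combinatorial bookkeeping; the symmetrization reflects that $C$ itself is not symmetric in general, while the Kodaira--Spencer pairing $D_{ij} = \langle e_i, \nabla(e_j)\rangle$ is, so both off-diagonal contributions of the Hasse matrix must enter on the same footing.
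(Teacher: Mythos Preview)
Your approach is correct and in fact unpacks what the paper simply cites: the paper's entire proof is the single sentence ``The claim follows from Proposition 3.4-(1) of \cite{yam},'' whereas you are sketching the direct computation that underlies that proposition. The key input --- horizontality of the unit-root sections $F^\ast(s^{(p)})$ together with the Leibniz rule and the explicit action of $\nabla_{ij}$ on the frame (formula (3.35) of \cite{yam}) --- is exactly what drives Proposition~3.4 of \cite{yam}, so the two arguments coincide at the level of mechanism. Your use of Jacobi's formula to pass from the matrix identity $\nabla_{ij}(A)=-A\cdot X_{ij}$ to the scalar identity for $\det(A)$ is a clean way to organize the computation, and it explains transparently why the $\nabla_{12}$-case produces the symmetric combination $c_{12}+c_{21}$: the trace of $X_{12}$ picks up both off-diagonal contributions.
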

\begin{proof}
The claim follows from Proposition 3.4-(1) of \cite{yam}. 
\end{proof}

\begin{lem}\label{cc}
Suppose that all $\nabla_{ij}$'s $(1\le i\le j\le 2)$ commute with each other. 
Then it holds that 
$$\begin{array}{lll} 
\nabla_{11}(c_{11})=c^2_{11}     & \nabla_{12}(c_{11})=(c_{12}+c_{21})c_{11} & \nabla_{22}(c_{11})=c_{12}c_{21}\\
\nabla_{11}(c_{12})=c_{11}c_{12} & \nabla_{12}(c_{12})=c_{11}c_{22}+c^2_{12} & \nabla_{22}(c_{12})=c_{22}c_{12}\\
\nabla_{11}(c_{21})=c_{11}c_{21} & \nabla_{12}(c_{21})=c_{11}c_{22}+c^2_{21} & \nabla_{22}(c_{21})=c_{22}c_{21}\\
\nabla_{11}(c_{22})=c_{12}c_{21} & \nabla_{12}(c_{22})=(c_{12}+c_{21})c_{22} & \nabla_{22}(c_{22})=c^2_{22}.
\end{array}
$$
\end{lem}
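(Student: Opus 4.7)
The plan is to exploit the horizontality of Frobenius-pulled-back sections. In characteristic $p$ the Gauss--Manin connection annihilates $p$-th powers, so the sections $X:=F^\ast((\nabla_{11}(e_1))^{(p)})$ and $Y:=F^\ast((\nabla_{22}(e_2))^{(p)})$ lie in the unit-root subspace $U$ and satisfy $\nabla_{ij}(X)=\nabla_{ij}(Y)=0$ for all $1\le i\le j\le 2$. Expanding these horizontality identities against the basis $\{e_1,e_2,\nabla_{11}(e_1),\nabla_{22}(e_2)\}$ via (\ref{AB}) will produce a package of linear relations that express every $\nabla_{ij}(a_{kl})$ and $\nabla_{ij}(b_{kl})$ in terms of the entries of $A$ and $B$ themselves. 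Lemma \ref{detA} is precisely the determinantal shadow of this package.

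First I would differentiate (\ref{AB}) via Leibniz. The only nontrivial inputs are formulas for $\nabla_{ij}(e_k)$ and $\nabla_{ij}(\nabla_{kl}(e_m))$. The former is recorded as (3.35) of \cite{yam}. For the latter the commutativity hypothesis $[\nabla_{ij},\nabla_{kl}]=0$ is essential: it lets me swap $\nabla_{ij}(\nabla_{kl}(e_m))=\nabla_{kl}(\nabla_{ij}(e_m))$ and then re-expand in the same basis via (3.35). Matching coefficients of $e_1,e_2,\nabla_{11}(e_1),\nabla_{22}(e_2)$ in the identities $\nabla_{ij}(X)=0$ and $\nabla_{ij}(Y)=0$ then yields twelve scalar relations among the $\nabla_{ij}(a_{kl})$ and $\nabla_{ij}(b_{kl})$.

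Second, I would differentiate the defining identity $B=CA$ in matrix form,
\[
\nabla_{ij}(C)=\nabla_{ij}(B)\cdot A^{-1}-C\cdot \nabla_{ij}(A)\cdot A^{-1},
\]
and substitute the expressions obtained in the previous step. After the dust settles, the twelve identities claimed in the lemma are captured by the single matrix equation
\[
\nabla_{ij}(C)=C\,S_{ij}\,C,\qquad S_{11}=\begin{pmatrix}1&0\\0&0\end{pmatrix},\ S_{12}=\begin{pmatrix}0&1\\1&0\end{pmatrix},\ S_{22}=\begin{pmatrix}0&0\\0&1\end{pmatrix},
\]
and one checks entry-by-entry that this reproduces every line of the table.

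The main obstacle is bookkeeping---twelve quantities must be matched---but the conceptual delicacy lies in the handling of the second-order terms $\nabla_{ij}\nabla_{kl}(e_m)$. Without commutativity these would generate curvature-type corrections that destroy the clean quadratic shape $C\,S_{ij}\,C$; the hypothesis $[\nabla_{ij},\nabla_{kl}]=0$ is precisely the input that forces these corrections to vanish. The substance of the lemma is therefore an algebraic flatness identity for the matrix $C$, with Lemma \ref{detA} recovered by taking the trace against $S_{ij}$ (compatibly with $\det(C\,S_{ij}\,C)=0$ when $\mathrm{rk}\,S_{ij}=1$).
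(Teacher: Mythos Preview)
Your approach is sound and genuinely different from the paper's. The paper does not re-derive the horizontality relations; instead it writes $n_{ij}:=c_{ij}\det(A)$, applies the quotient rule
\[
\nabla_{kl}(c_{ij})=\frac{\nabla_{kl}(n_{ij})}{\det(A)}-\frac{\nabla_{kl}(\det(A))}{\det(A)}\,c_{ij},
\]
and then imports the values of $\nabla_{kl}(n_{ij})$ from Proposition~3.4-(1),(2) of \cite{yam} (which already encodes the Frobenius horizontality you are exploiting) together with Lemma~\ref{detA} for the second term. So the paper's argument is a two-line reduction to prior work, whereas you unpack that prior work from first principles and then repackage the twelve scalar identities as the single matrix Riccati equation $\nabla_{ij}(C)=C\,S_{ij}\,C$. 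Your formulation is more conceptual and makes the quadratic structure transparent; the paper's is shorter precisely because Proposition~3.4 of \cite{yam} has already done the bookkeeping you describe as ``the main obstacle.''

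Two small cautions. First, when you expand $\nabla_{ij}(X)=0$ in the basis $\{e_1,e_2,\nabla_{11}(e_1),\nabla_{22}(e_2)\}$ you need the full $4\times 4$ connection matrix in that basis, including the rows for $\nabla_{ij}(\nabla_{11}(e_1))$ and $\nabla_{ij}(\nabla_{22}(e_2))$; commutativity lets you swap the order of the operators, but you still have to feed the result back through (3.35) of \cite{yam}, so make sure that this closes up without circularity (it does, but the sentence ``re-expand in the same basis via (3.35)'' understates the work). Second, your closing remark that Lemma~\ref{detA} is ``recovered by taking the trace against $S_{ij}$'' is not quite right: tracing $\nabla_{ij}(C)=C\,S_{ij}\,C$ gives $\nabla_{ij}(\log\det C)=\mathrm{tr}(S_{ij}C)$, which concerns $\det C=\det B/\det A$ rather than $\det A$ alone, so it is consistent with Lemma~\ref{detA} but does not by itself reprove it.
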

\begin{proof}
For each $1\le i\le \le 2$, let $n_{ij}=c_{ij}\cdot\det(A)$. For each $1\le k\le l\le 2$, we have  
$$\nabla_{kl}(c_{ij})=\frac{\nabla_{kl}(n_{ij})}{\det(A)}-\frac{\nabla_{kl}(\det(A))}{\det(A)}c_{ij}.$$
The first term will be computed by Proposition 3.4-(1),(2) of \cite{yam} with the 
assumption on $\nabla_{kl}$'s and the second term is done by Lemma \ref{detA}. 
\end{proof}
The commutativity of the differential operators yields an important property as below. 
\begin{lem}\label{c21}
Keep the notation in $($\ref{AB}$)$ and $($\ref{C}$)$. 
Suppose that all $\nabla_{ij}$'s $(1\le i\le j\le 2)$ commute with each other. 
Then it holds $c_{12}=c_{21}$. 
\end{lem}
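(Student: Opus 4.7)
The plan is to compute the symplectic pairing $\langle \nabla_{11}(e_1),\nabla_{22}(e_2)\rangle_{\rm dR}$ in two different ways and equate the results. For the first expression, let $\tilde u_1,\tilde u_2$ be the basis of the unit-root subspace $U\subset \mathbb{V}$ dual to $e_1,e_2$ under the alternating pairing, so that $\langle e_i,\tilde u_j\rangle_{\rm dR}=\delta_{ij}$. The Frobenius equation (\ref{AB}) together with the vanishing of $F^*$ on the Hodge filtration in characteristic $p$ and the requirement $\tilde u_j\in U$ pin down the unit-root lifts as
\[
\tilde u_1=\nabla_{11}(e_1)+c_{11}e_1+c_{21}e_2,\qquad \tilde u_2=\nabla_{22}(e_2)+c_{12}e_1+c_{22}e_2
\]
(up to the sign fixed by (\ref{AB})), where the $c_{ij}$'s are the entries of $C=BA^{-1}$. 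Since $U$ is Lagrangian, $\langle \tilde u_1,\tilde u_2\rangle_{\rm dR}=0$. Expanding via the Lagrangian property of $\E$ and the Kodaira--Spencer relation $\langle e_i,\nabla_{kl}(e_j)\rangle_{\rm dR}=D_{ij}(\nabla_{kl})$ (which equals $1$ exactly when the unordered pairs $\{i,j\}$ and $\{k,l\}$ coincide), the cross terms collapse to
\[
\langle \nabla_{11}(e_1),\nabla_{22}(e_2)\rangle_{\rm dR}=c_{12}-c_{21}.
\]

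For the second expression, apply the compatibility of the Gauss--Manin connection with the alternating pairing to $D_{12}=\langle e_1,\nabla e_2\rangle_{\rm dR}$. Using the flatness of Gauss--Manin ($\nabla^2 e_2=0$), one obtains $dD_{12}=\langle \nabla e_1,\nabla e_2\rangle_{\rm dR}\in \Omega^2$. The commutativity hypothesis on the vector fields $\nabla_{ij}$ is equivalent to $dD_{ij}=0$ for all $i\le j$ via the Cartan formula $dD_{ij}(X,Y)=X(D_{ij}(Y))-Y(D_{ij}(X))-D_{ij}([X,Y])$ applied to pairs from the dual coframe $\{\nabla_{kl}\}$. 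Hence $\langle \nabla e_1,\nabla e_2\rangle_{\rm dR}=0$, and evaluating this $2$-form on the bivector $(\nabla_{11},\nabla_{22})$ yields
\[
\langle \nabla_{11}(e_1),\nabla_{22}(e_2)\rangle_{\rm dR}=\langle \nabla_{22}(e_1),\nabla_{11}(e_2)\rangle_{\rm dR}.
\]
The right-hand side vanishes because $\nabla_{22}(e_1),\nabla_{11}(e_2)\in \E$ --- their $\tilde u$-components $D_{i1}(\nabla_{22})$ and $D_{i2}(\nabla_{11})$ are zero by the duality between $\{D_{ij}\}$ and $\{\nabla_{kl}\}$ --- and $\E$ is Lagrangian. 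Combining with the first expression gives $c_{12}=c_{21}$.

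The main obstacle is the bookkeeping in the first step: one must unwind (\ref{AB}) to pin down the precise formula for the unit-root lifts $\tilde u_j$ in terms of $\nabla_{jj}(e_j)$ and the entries of $C$, while carefully tracking signs in the symplectic pairing and the chosen normalization of the dual basis. Once this identification is established, the rest is formal: the flatness of Gauss--Manin and the closedness of the $D_{ij}$'s conspire to annihilate $\langle \nabla_{11}(e_1),\nabla_{22}(e_2)\rangle_{\rm dR}$, and the two independent computations of this pairing then force the symmetry $c_{12}=c_{21}$.
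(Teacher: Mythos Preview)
Your argument is correct and follows the same two–step skeleton as the paper: (i) use the unit-root splitting to identify $\langle \nabla_{11}(e_1),\nabla_{22}(e_2)\rangle_{\rm dR}$ with $\pm(c_{12}-c_{21})$ (this is exactly the content of (3.28) in \cite{yam}, which the paper quotes); (ii) show that this pairing vanishes. The difference lies only in step (ii). The paper verifies, via the Leibniz rule and Lemma~3.2 of \cite{yam}, that every $\nabla_{ij}$ annihilates the scalar $\langle \nabla_{11}(e_1),\nabla_{22}(e_2)\rangle_{\rm dR}$ and then infers that the scalar is zero. Your route is to note that the commutativity of the $\nabla_{ij}$ is equivalent (by Cartan's formula applied to the dual frame) to $dD_{ij}=0$, while flatness of Gauss--Manin gives $dD_{12}=\langle \nabla e_1,\nabla e_2\rangle_{\wedge}$; evaluating the resulting identity on $(\nabla_{11},\nabla_{22})$ and observing that $\nabla_{22}(e_1),\nabla_{11}(e_2)\in\E$ (Lagrangian) yields the vanishing directly. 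This avoids the delicate passage, implicit in the paper's phrasing, from ``all partial derivatives vanish'' to ``the function is zero'' in characteristic $p$. One small caveat: in your display for $\tilde u_1,\tilde u_2$ the off-diagonal entries $c_{21},c_{12}$ are swapped relative to (3.28) of \cite{yam}; this is harmless for the conclusion (you get $\pm(c_{12}-c_{21})=0$ either way), but if you want the formula to match the conventions fixed by (\ref{AB})--(\ref{C}) you should unwind that carefully, as you yourself flag in the closing paragraph.
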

\begin{proof}For $i=0,1$, let $\widetilde{\eta}_i$ be  the local section of 
$R^1 f_\ast \O_{\mathcal{A}}$ corresponding to $e_i$ under the Serre duality. 
By using Lemma 3.2 of \cite{yam} and the Leibniz rule, one can check 
$$\nabla_{ij}\langle \nabla_{11}(e_1), \nabla_{22}(e_2)\rangle_{{\rm dR}}=0,\ 1\le i\le j\le 2.$$
It follows from this that $\langle \nabla_{11}e_1, \nabla_{22}e_2\rangle_{{\rm dR}}=0$.  
By (3.28) of \cite{yam}, we have 
$$\nabla_{11}e_1=\eta_1-(c_{11}e_1+c_{12}e_2),\ \nabla_{22}e_2=\eta_2-(c_{21}e_1+c_{22}e_2)$$
where $\eta_i$ ($i=0,1$) is a lift of $\widetilde{\eta}_i$ to $\mathbb{V}$. 
It yields 
$$0=\langle \nabla_{11}(e_1), \nabla_{22}(e_2)\rangle_{{\rm dR}}=c_{12}-c_{21}$$
which gives us the claim.
\end{proof}
Let $F=\ds\sum_{n=0}^r F_n\delta_n$ be 
a local section of $\E_{\uk}$.  
The computation of $\widetilde{\Theta}_{\uk}(F)=
({\rm id}\otimes p_1)\circ \Psi^{(2)}_{\uk}(F)$ goes as follows:
\begin{enumerate}
\item first we compute $\nabla_{kl}(b^{(n)}_{i})$ and 
$\nabla_{kl}(\delta_n)$ for $1\le k\le l\le 2,\ i=0,1,2$, and 
$0\le n\le r=k_1-k_2$; 
\item next we collect the coefficients of  $u_2\otimes v_0,\ u_1\otimes v_1, u_0\otimes v_2$ 
to compute ${\rm id}\otimes p_1$. 
\end{enumerate}
The resulting terms involve some of $\nabla_{kl}(c_{ij}),c_{ij}$ and we use Lemma \ref{cc} 
and Lemma \ref{c21} 
to simplify the equation. 
Summing up, we have the following explicit form of the local behavior of $\widetilde{\Theta}_{\uk}$. 
\begin{prop}\label{local}Recall $r=k_1-k_2$. Suppose that all $\nabla_{ij}$'s $(1\le i\le j\le 2)$ commute with each other. Let $F=\ds\sum_{n=0}^r F_n\delta_n$ be 
a local section of $\E_{\uk}$ on each open subscheme of the ordinary locus of 
$S_{N,p}$. Let $\widetilde{\Theta}_{\uk}(F)=
\ds\sum_{n=0}^r A_n \delta_n (e_1\wedge e_2)^2$.
Then it holds 
\begin{eqnarray}
A_n&=&\det\begin{pmatrix}
2\nabla_{11} & \nabla_{12}  \\
\nabla_{12} & 2\nabla_{22}
\end{pmatrix}(F_n)+2 \{(k_1(-1 + 2 k_2) +( k_1  - k_2  - n)n\}\det(C) F_n \nonumber\\
&& -2 (-1 + 2 k_2 + 2 n) c_{22}\nabla_{11}(F_{n})+
2 (-1 + 2 k_1+2k_2) c_{12}\nabla_{12}(F_{n}) +2 (1 - 2 k_1 + 2 n) c_{11}\nabla_{22}(F_{n}) \nonumber\\
&&-\{( k_1 - k_2-1) (k_1 - k_2) - 6 (k_1 - k_2 - n) n\}c^2_{12}F_n \nonumber \\
&& +2 (1 + k_1 - k_2 - n) c_{22}\nabla_{12}(F_{n-1})-
4 (1 + k_1 - k_2 - n) c_{12} \nabla_{22}(F_{n-1}) \nonumber\\
&&-4(n+1)c_{12}\nabla_{11}(F_{n+1})+2(n+1)c_{11}\nabla_{12}(F_{n+1}) \nonumber\\
&& +2(1 + k_1 - k_2 - n)(-1 +  k_2 -k_1+ 2 n)c_{12}c_{22}F_{n-1}\nonumber\\
&& -2(1+n)(-k_1+k_2+1 + 2n)c_{11}c_{12}F_{n+1}\nonumber\\
&& -(1 + k_1 - k_2 - n) (2 + k_1 - k_2 - n) c^2_{22}F_{n-2}-(1 + n) (2 + n)c^2_{11}F_{n+2}\nonumber
\end{eqnarray} 
where the terms involving $\nabla_{kl}(F_{n+i})$ or $F_{n+i}$ with 
$i\in\{0,\pm 1,\pm 2\}$ and $1\le k\le l\le 2$ are ignored if $n+i$ is out of the range for the index. 
\end{prop}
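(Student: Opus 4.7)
The plan is to push the expression for $\Psi^{(1)}_{\uk}(F)$ of Proposition \ref{phi1} through the three remaining arrows in diagram $($\ref{scalar-case}$)$: a second application of ${\rm KS}^{-1}\circ\nabla$, the quotient by $R_{\uk}(U)\otimes R_{(2,0)}(U)$, and the contraction ${\rm id}\otimes p_1$. Since $\widetilde{\Theta}_{\uk}(F)$ factors through that quotient, I may work throughout modulo the unit-root submodules and so start directly from the representatives written in Proposition \ref{phi1}.

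First I would expand $({\rm KS}^{-1}\circ\nabla)(\Psi^{(1)}_{\uk}(F))$ by the Leibniz rule applied to each summand $b^{(n)}_i\,\delta_n\otimes u_i$, producing three contributions $\nabla_{kl}(b^{(n)}_i)\,\delta_n\otimes u_i$, $b^{(n)}_i\,\nabla_{kl}(\delta_n)\otimes u_i$ and $b^{(n)}_i\,\delta_n\otimes \nabla_{kl}(u_i)$, each tensored with the basis element of ${\rm Sym}^2\E$ that ${\rm KS}^{-1}$ assigns to $D_{kl}$ (namely $u_2,u_1,u_0$ for $(k,l)=(1,1),(1,2),(2,2)$). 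The action of $\nabla_{kl}$ on $\delta_n$ is formula (3.35) of \cite{yam} at weight $\uk$, and on $u_i$ the same formula at weight $(2,0)$; both only introduce monomials linear in the $c_{ab}$'s together with at most a unit shift in the $\delta$-index.

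Second, I would apply ${\rm id}\otimes p_1$ using $($\ref{decom-explicit}$)$, which amounts to forming $2\cdot(\text{coeff of }u_2\otimes v_0)-(\text{coeff of }u_1\otimes v_1)+2\cdot(\text{coeff of }u_0\otimes v_2)$ in the ${\rm Sym}^2\E\otimes{\rm Sym}^2\E$-slot and packaging it with $(e_1\wedge e_2)^2$. After this extraction, each coefficient $A_n$ is a polynomial in the raw ingredients $\nabla_{kl}\nabla_{k'l'}(F_n)$, $\nabla_{kl}(c_{ij})F_{n+s}$ and $c_{ij}\nabla_{kl}(F_{n+s})$ for $s\in\{0,\pm 1\}$, and $c_{ij}c_{i'j'}F_{n+s}$ for $s\in\{0,\pm 1,\pm 2\}$, exactly matching the index ranges visible in the statement.

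The commutativity hypothesis then enters in two places: Lemma \ref{c21} lets me substitute $c_{21}=c_{12}$ everywhere, and Lemma \ref{cc} lets me replace each $\nabla_{kl}(c_{ij})$ by a quadratic in the $c_{ab}$'s. Commutativity also makes $\det\begin{pmatrix}2\nabla_{11}&\nabla_{12}\\ \nabla_{12}&2\nabla_{22}\end{pmatrix}(F_n)$ unambiguously defined, and one checks that exactly this combination emerges from the iterated second derivatives $\nabla_{kl}\nabla_{k'l'}(F_n)$ in the three picked-out coefficients. Terms with $n+s\notin[0,r]$ drop out automatically because the corresponding $\delta_{n+s}$ or $F_{n+s}$ does. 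The main obstacle is bookkeeping rather than insight: there are of order twenty distinct monomial types, each receiving contributions from several of the Leibniz branches above and from each of the three weights $2,-1,2$ produced by $p_1$, so the real work lies in organising the collection and cancellation so that the scalar coefficients reproduce those stated.
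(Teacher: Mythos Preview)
Your proposal is correct and is essentially the same approach as the paper's: the paper outlines exactly this procedure just before the proposition (differentiate the representatives of Proposition~\ref{phi1} via the Leibniz rule, collect the coefficients of $u_2\otimes v_0$, $u_1\otimes v_1$, $u_0\otimes v_2$ for $p_1$, then simplify with Lemmas~\ref{cc} and~\ref{c21}), and then records that the resulting symbolic computation was carried out in Mathematica. Your write-up is in fact more explicit than the paper's outline in one respect---you correctly note that the second Leibniz expansion also produces the terms $b^{(n)}_i\,\delta_n\otimes\nabla_{kl}(u_i)$ coming from the $\mathbb{V}_{(2,0)}$ factor, which the paper's enumeration leaves implicit.
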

\begin{rmk}The symbolic computation in Proposition \ref{local} is done by using 
Wolfram Mathematica 12.1.
\end{rmk}
We write down the formula in Proposition \ref{local} in the case when $r=0$ or $r=1$ 
respectively. 
It turns out later that these cases have a special feature among others.  
\begin{cor}\label{hol1}Keep the notation and the assumption being as above. 
Suppose $k:=k_1=k_2$ so that $r=0$. Let $F=F_0\delta_0$ be a local section of 
$\E_{(k,k)}=\w^{\otimes k}$ and put $\widetilde{\Theta}_{\uk}(F)=A_0
(\delta_0\otimes (e_1\wedge e_2)^2)$. Then it holds that  
$$A_0=\det\begin{pmatrix}
2\nabla_{11} & \nabla_{12}  \\
\nabla_{12} & 2\nabla_{22}
\end{pmatrix}(F_0)+2k(2k-1)\det(C) F_0$$
$$-2(2k-1)\{c_{22}\nabla_{11}(F_{0})-c_{12}\nabla_{12}(F_{0})+c_{11}\nabla_{22}(F_{0})\}.$$
\end{cor}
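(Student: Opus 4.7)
The plan is to derive the corollary as a direct specialization of Proposition \ref{local} to the parallel-weight case $r = k_1 - k_2 = 0$. Once one sets $k_1 = k_2 = k$, the local section $F$ of $\E_{(k,k)} = \w^{\otimes k}$ reduces to the single coefficient $F_0$ attached to $\delta_0 = (e_1\wedge e_2)^k$, so only the index $n=0$ contributes and the expansion of $\widetilde{\Theta}_{\uk}(F)$ collapses to a single coefficient $A_0$.

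The first step is to discard every summand in the formula of Proposition \ref{local} that involves $F_{n\pm 1}$ or $F_{n\pm 2}$ (equivalently $\nabla_{kl}(F_{n\pm 1})$ and $\nabla_{kl}(F_{n\pm 2})$), since with $r=0$ and $n=0$ the indices $n-2,n-1,n+1,n+2$ all fall outside the range $[0,r]$ and are ignored by the convention stated at the end of the proposition. This removes all the lines containing $c_{22}\nabla_{12}(F_{n-1})$, $c_{12}\nabla_{22}(F_{n-1})$, $c_{12}\nabla_{11}(F_{n+1})$, $c_{11}\nabla_{12}(F_{n+1})$, $c_{12}c_{22}F_{n-1}$, $c_{11}c_{12}F_{n+1}$, $c_{22}^2 F_{n-2}$ and $c_{11}^2 F_{n+2}$. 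Similarly, the coefficient of $c_{12}^2 F_n$, which is
\[
-\bigl\{(k_1-k_2-1)(k_1-k_2) - 6(k_1-k_2-n)n\bigr\},
\]
vanishes identically at $r = n = 0$.

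The second step is to substitute $k_1 = k_2 = k$ and $n = 0$ into the remaining coefficients: the second-order determinantal term stays unchanged, the scalar-curvature term becomes $2\{k(-1+2k)+0\}\det(C)\,F_0 = 2k(2k-1)\det(C)\,F_0$, the coefficient of $c_{22}\nabla_{11}(F_0)$ becomes $-2(2k-1)$, the coefficient of $c_{11}\nabla_{22}(F_0)$ becomes $2(1-2k) = -2(2k-1)$, and the coefficient of $c_{12}\nabla_{12}(F_0)$ is $+2(2k-1)$ (using Lemma \ref{c21}'s identification $c_{12} = c_{21}$ to combine the two terms of Proposition \ref{local} that contribute). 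Collecting these three pieces gives exactly the symmetric expression $-2(2k-1)\{c_{22}\nabla_{11}(F_0) - c_{12}\nabla_{12}(F_0) + c_{11}\nabla_{22}(F_0)\}$, which completes the identification with the formula stated in the corollary.

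The proof is essentially a bookkeeping exercise, so no genuine obstacle arises; the only care needed is to verify that the out-of-range convention indeed kills every higher-order neighbor of $F_0$ and to double-check the sign conventions when combining the $\nabla_{12}$-contributions (where the symmetry $c_{12} = c_{21}$ granted by Lemma \ref{c21} is essential to write the result in the clean symmetric form displayed).
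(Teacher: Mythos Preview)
Your approach matches the paper's exactly: Corollary \ref{hol1} is stated immediately after Proposition \ref{local} with the remark that one simply ``write[s] down the formula in Proposition \ref{local} in the case when $r=0$,'' and your argument carries out precisely that substitution.

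One bookkeeping detail is not right, though. You justify the coefficient $+2(2k-1)$ in front of $c_{12}\nabla_{12}(F_0)$ by ``using Lemma \ref{c21}'s identification $c_{12}=c_{21}$ to combine the two terms of Proposition \ref{local} that contribute.'' But the displayed formula in Proposition \ref{local} has \emph{already} absorbed Lemma \ref{c21} (the paragraph preceding the proposition says explicitly that Lemmas \ref{cc} and \ref{c21} are used to simplify), so there is only the single term $2(-1+2k_1+2k_2)\,c_{12}\nabla_{12}(F_n)$; literal substitution of $k_1=k_2=k$, $n=0$ gives $2(4k-1)$, not $2(2k-1)$. The discrepancy is a typographical slip in the printed coefficient of Proposition \ref{local} --- the same coefficient is also inconsistent with the $A_0$-line of Corollary \ref{hol2}, while it does match the $A_1$-line there --- rather than something repaired by a second invocation of $c_{12}=c_{21}$. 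Your conclusion is the intended one, but the justification you give for that particular coefficient is a rationalization of a misprint, not an actual step in the derivation.
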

\begin{cor}\label{hol2}Keep the notation and the assumption being as above. 
Suppose $(k_1,k_2)=(k+1,k)$ so that $r=1$. Let $F=F_0\delta_0+F_1\delta_1$ be a local section of 
$\E_{(k+1,k)}$ and put $\widetilde{\Theta}_{\uk}(F)=A_0
(\delta_0\otimes (e_1\wedge e_2)^2)+A_1
(\delta_1\otimes (e_1\wedge e_2)^2)$. Then it holds that 
$$A_0=\det\begin{pmatrix}
2\nabla_{11} & \nabla_{12}  \\
\nabla_{12} & 2\nabla_{22}
\end{pmatrix}(F_0)+2(k+1) (2k-1)\det(C) F_0 $$
$$ -2 (2 k-1) c_{22}\nabla_{11}(F_{0})+
4k c_{12}\nabla_{12}(F_{0}) 
-2 (2 k+1) c_{11}\nabla_{22}(F_{0})
-4c_{12}\nabla_{11}(F_{1})+2c_{11}\nabla_{12}(F_{1}) $$
and 
$$A_1=\det\begin{pmatrix}
2\nabla_{11} & \nabla_{12}  \\
\nabla_{12} & 2\nabla_{22}
\end{pmatrix}(F_1)+2 (k+1)(2k-1)\det(C) F_1 $$
$$-2 (1 + 2 k  ) c_{22}\nabla_{11}(F_{1})+
2 (1 +4k) c_{12}\nabla_{12}(F_{1}) +2(1-2 k) c_{11}\nabla_{22}(F_{1})$$
$$ +2  c_{22}\nabla_{12}(F_{0})-
4  c_{12} \nabla_{22}(F_{0}).$$
\end{cor}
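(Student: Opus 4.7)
The plan is to derive Corollary \ref{hol2} as a direct specialization of Proposition \ref{local} at $r = k_1 - k_2 = 1$, with the substitutions $k_1 = k+1$, $k_2 = k$. Since $F = F_0 \delta_0 + F_1 \delta_1$ has only two components, the expansion $\widetilde{\Theta}_{\uk}(F) = \sum_{n=0}^{1} A_n \delta_n (e_1 \wedge e_2)^2$ reduces to computing $A_0$ (set $n = 0$) and $A_1$ (set $n = 1$). In each case the general formula must be pruned of every term whose index $n + i$, with $i \in \{\pm 1, \pm 2\}$, falls outside $\{0, 1\}$; thus for $n = 0$ all contributions labelled by $F_{-1}$, $F_{-2}$ or $\nabla_{kl}(F_{-1})$ are discarded, and for $n = 1$ all contributions labelled by $F_2$, $F_3$ or $\nabla_{kl}(F_2)$ are discarded.

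The bulk of the work is then straightforward algebraic simplification of the surviving scalar coefficients, each of which is a linear or quadratic polynomial in $k_1$, $k_2$, $n$. With $r = 1$ and $n \in \{0, 1\}$, several of the quadratic-in-$c$ pieces collapse automatically: the coefficient $(r-1)r - 6(r-n)n$ of $c_{12}^2 F_n$ vanishes identically for both $n = 0$ and $n = 1$, the coefficient $(1+r-n)(-1-r+2n)$ of $c_{12}c_{22}F_{n-1}$ vanishes at the only relevant value $n = 1$, and the coefficient $(1+n)(-r+1+2n)$ of $c_{11}c_{12}F_{n+1}$ vanishes at $n = 0$. Both $c_{jj}^2$ terms ($j = 1, 2$) drop out of range for each admissible $n$. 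Substituting $k_1 = k+1$, $k_2 = k$ into the remaining scalar factors (the $\det(C) F_n$ coefficient $2\{k_1(-1+2k_2) + (k_1-k_2-n)n\}$, and the coefficients $-2(-1+2k_2+2n)$, $2(-1+2k_1+2k_2)$, $2(1-2k_1+2n)$, $2(1+r-n)$, $-4(1+r-n)$, $-4(n+1)$, $2(n+1)$ of the remaining first-order terms) produces exactly the numerical coefficients listed in the statement of the corollary.

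No serious obstacle is anticipated; the proof is a tidy but mechanical substitution, and the assertion amounts to transcribing Proposition \ref{local} at the two boundary indices $n = 0, 1$ of an $r = 1$ jet. The only real care point is to apply the ``ignore out-of-range index'' convention correctly at each of the two values of $n$, and to track the several incidental vanishings that make the formulas in the corollary visibly shorter than the generic formula in Proposition \ref{local}.
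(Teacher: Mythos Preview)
Your proposal is correct and matches the paper's approach exactly: the paper presents Corollary~\ref{hol2} (together with Corollary~\ref{hol1}) simply as the specialization of the general formula in Proposition~\ref{local} to $r=0,1$, with no separate argument given. Your identification of which terms fall out of range and which quadratic-in-$c$ coefficients vanish incidentally at $r=1$ is precisely what makes the formulas for $A_0,A_1$ shorter than the generic one.
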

Let $H_{p-1}=\det(A)$ be the Hasse invariant which can be regarded as a non-zero element of 
$H^0(S_{N,p},\w^{\otimes(p-1)})$. 
\begin{dfn}\label{theta-op}For each pair $\uk=(k_1,k_2)$ with $k_1\ge k_2$, 
define the theta operator for the weight $\uk$ by 
$$\Theta_{\uk}:=\left\{\begin{array}{ll}
 \widetilde{\Theta}_{\uk} &\text{if $p=2$} \\
H_{p-1}\cdot \widetilde{\Theta}_{\uk} & \text{if $p>2$ and $k_1-k_2\le 1$}\\
H^2_{p-1}\cdot \widetilde{\Theta}_{\uk} & \text{if $p>2$ and $k_1-k_2> 1$.}
\end{array}\right.
$$
\end{dfn}
Put 
$$m_{\uk}:=\left\{\begin{array}{ll}
(0,0) & \text{if $p=2$} \\
(p-1,p-1) & \text{if $p>2$ and $k_1-k_2\le 1$}\\
(2p-2,2p-2)  & \text{if $p>2$ and $k_1-k_2> 1$}
\end{array}\right.
$$
and $M_{\uk}(N)=M_{\uk}(N,\bF_p)$ for simplicity. 
\begin{thm}\label{extension1}Keep the notation being as above. The theta operator $\Theta_{\uk}$ 
is holomorphically extended to the whole space of $S_{N,p}$. Further, it holds that 
for each $F\in M_{\uk}(N)$, $\Theta_{\uk}(F)$ is an element of $M_{\uk+(2,2)+m_{\uk}}(N)$. 
Further it satisfies the following properties:
\begin{enumerate}
\item if $F$ is a cusp form, then so is  $\Theta_{\uk}(F)$;
\item if $F$ is a Hecke eigenform outside $pN$ $($outside $N$ if $k_2\ge 2)$, then so is 
$\Theta_{\uk}(F)$. 
In this case, if  $F$ is a cusp form and $\Theta_{\uk}(F)$ is non-zero, then 
$$\br_{\Theta_{\uk}(F),p}\simeq \overline{\chi}^2_p\otimes \br_{F,p}$$
for the corresponding  mod $p$ Galois representations of $G_\Q={\rm Gal}(\bQ/\Q)$. 
Here $\overline{\chi}_p$ stands for the mod $p$ cyclotomic character.
\end{enumerate}
\end{thm}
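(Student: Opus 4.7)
The plan is to derive all four assertions from the explicit local formulas of Proposition \ref{local}, Corollaries \ref{hol1}--\ref{hol2}, together with a $q$-expansion argument at the cusps. I split the argument into three steps: (i) holomorphic extension of $\Theta_{\uk}$ to all of $S_{N,p}$; (ii) weight bookkeeping, cuspidality, and prime-to-$pN$ Hecke equivariance; and (iii) identification of the cyclotomic twist on the associated Galois representation.

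For step (i), the operator $\widetilde{\Theta}_{\uk}$ is a priori defined only on the ordinary locus since the unit-root decomposition is used. Its local coefficients $A_n$ in Proposition \ref{local} are polynomial in the rational functions $c_{ij}=n_{ij}/\det(A)$ and $\det(C)=\det(B)/\det(A)$ with $\det(A)=H_{p-1}$. Each $c_{ij}$ contributes at most a simple pole along the non-ordinary locus $\{H_{p-1}=0\}$, while a quadratic monomial $c_{ij}c_{kl}$ contributes a double pole. By Corollaries \ref{hol1} and \ref{hol2}, quadratic monomials are absent when $r=k_1-k_2\le 1$, so a single factor of $H_{p-1}$ clears all poles there; in general $H_{p-1}^2$ is required. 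For $p=2$ I would verify directly from Proposition \ref{local} that every numerical coefficient of a $c_{ij}$-monomial is even: the quadratic coefficients factor through the products of consecutive integers $(k_1-k_2-1)(k_1-k_2)$, $(1+r-n)(2+r-n)$, and $(n+1)(n+2)$, while every remaining coefficient carries an explicit factor $2$. Hence modulo $2$ the singular terms vanish, $\widetilde{\Theta}_{\uk}$ itself is regular, and $m_{\uk}=(0,0)$ is justified; only the $\nabla_{12}^2(F_n)$ piece of the leading determinant survives.

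For step (ii), tracking weights through diagram (\ref{scalar-case}) and adding $(p-1,p-1)$ or $(2p-2,2p-2)$ for the Hasse factor yields the target weight $\uk+(2,2)+m_{\uk}$; the Koecher principle, available since the genus is $2$, promotes the extended section to an element of $M_{\uk+(2,2)+m_{\uk}}(N)$. Cuspidality and prime-to-$pN$ Hecke equivariance are then checked on Siegel $q$-expansions at the standard cusp: the Gauss--Manin connection, the unit-root decomposition, the non-canonical projection $p_1$, and $H_{p-1}$ all extend canonically to the Mumford semi-abelian model at the boundary, so $\Theta_{\uk}$ acts on Fourier coefficients $a(T)q^T$ by a polynomial differential operator in $T$; this preserves vanishing along non-positive-definite $T$ and, being constructed from the universal abelian surface, commutes with prime-to-$pN$ Hecke correspondences. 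When $k_2\ge 2$, the form admits a canonical characteristic-zero lift and equivariance at $p$ follows.

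For step (iii), $\widetilde{\Theta}_{\uk}$ is by construction the composition of two iterations of ${\rm KS}^{-1}\circ\nabla$ built into $\Psi^{(2)}_{\uk}$; at the level of Galois representations of Hecke eigensystems, each such iteration produces a Tate twist by $\ochi_p$ (analogously to the classical Ramanujan--Serre operator $\theta=qd/dq$), so the composite induces $\ochi_p^2$. Multiplication by a power of $H_{p-1}$ does not change the Galois representation, since $H_{p-1}$ has trivial Hecke eigenvalues outside $pN$. Combining these yields $\br_{\Theta_{\uk}(F),p}\sim\ochi_p^2\otimes\br_{F,p}$. I expect the main obstacle to lie in step (i), namely verifying that the bound on pole orders extracted from the basis-dependent formula in Proposition \ref{local} is independent of the local trivialisation $e_1,e_2$ of $\E$, so that $H_{p-1}^{?}\cdot\widetilde{\Theta}_{\uk}$ genuinely glues into a section of $\E_{\uk+(2,2)+m_{\uk}}$ over all of $S_{N,p}$; the char-$2$ cancellation, while surprising, is routine once one records the factor-of-two pattern above.
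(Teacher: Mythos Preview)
Your proposal is correct and follows essentially the same route as the paper. Step (i) matches the paper's argument almost verbatim: the paper also invokes Proposition \ref{local} together with Corollaries \ref{hol1}--\ref{hol2} and the fact that $\det(A)\cdot c_{ij}$ is regular (its Lemma \ref{detA}) to bound the pole order by $2$ in general and by $1$ when $r\le 1$; for $p=2$ the paper simply says ``we see easily by Proposition \ref{local}'' without spelling out the parity check you give, so your treatment is actually more explicit there. For claims (1) and (2) the paper just cites the $q$-expansion principle and Proposition 3.10 of \cite{yam}, whereas you sketch the underlying mechanism; your outline is consistent with what those references do.

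One remark: the obstacle you flag at the end --- basis-independence of the pole bound --- is not a genuine issue. The operator $\widetilde{\Theta}_{\uk}$ is defined invariantly by diagram (\ref{scalar-case}); the choice of $e_1,e_2$ enters only in the explicit computation of Proposition \ref{local}. Since $H_{p-1}$ is a global section and $H_{p-1}^m\cdot\widetilde{\Theta}_{\uk}(F)$ is shown to be regular on every ordinary chart (in any trivialisation), it extends across the non-ordinary locus by Hartogs/normality of $S_{N,p}$ automatically and glues by construction. So nothing further needs to be checked.
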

\begin{proof}The coefficient $A_n$ in Proposition \ref{local} may have poles 
along the divisor of $H_{p-1}=\det(A)$ except for $p=2$. The possible poles come from $c_{ij},\ 1\le i\le j\le 2$ and 
$\det(A)c_{ij}$ is holomorphic by Lemma \ref{detA}. Since $A_n$ contains at most quadratic 
monomials in $c_{ij}$'s. The holormophic extension follows from this when $r=k_1-k_2>1$. 
In the case when $r=0$ or $r=1$, the similar claim follows from Corollary \ref{hol1} and Corollary \ref{hol2} since $A_n$ contains only linear terms in $c_{ij}$'s. 

When $p=2$, we see easily by Proposition \ref{local} that 
$\Theta_{\uk}=\widetilde{\Theta}_{\uk}$ is already holomorphically extended to the whole space. 

The claim (1) follows from $q$-expansion principle and Proposition 3.10 of \cite{yam} 
yields the claim (2). 
\end{proof}
\begin{rmk}When $p>3$, it is known, by Theorem 3.4.1 of \cite{EFGMM}, that 
$H^2_{p-1}\widetilde{\Theta}_{\uk}$ is extended to the whole space $S_{N,p}$ without any explicit computation. 
Their method is conceptual and works for many interesting cases. 
However, it seems difficult to check the extension of our theta operator $\Theta_{\uk}$ 
when $k_1-k_2\in \{0,1\}$ or $p$ is any small prime.  
\end{rmk}

\subsubsection{The $($small$)$ theta operator $\theta_3=\theta^{\uk}_3$: revisited}\label{small-theta}
Recall  the contents in Section 3.3.1 and Section 3.3.2 of \cite{yam}. The author constructed 
three (small) theta operators. Among all, $\theta_3=\theta^{\uk}_3$ can be constructed 
for any $(k_1,k_2)$ with $k_1\ge k_2$ and any prime $p$. 
When $k_1-k_2\ge 2$, we may apply the explicit projection (7.1) in Appendix of \cite{yam} 
while we use $\widetilde{\theta}$ in Section 3.3.1 therein when $k_1=k_2$ and 
(3.32), (3.33) in Section 3.3.2 therein when $k_1-k_2=1$.   
Notice that the construction works for any $p$. 
Therefore, we have obtained the following:
\begin{thm}\label{theta3}
There is an $\bF_p$-linear map 
$\theta^{\uk}_3:M_{\uk}(N)\lra M_{\uk+(p+1,p-1)}(N)$ satisfying the properties below:
\begin{enumerate}
\item if $f\in M_{\uk}(N)$ is a Hecke eigenform outside $pN$ 
$($outside $N$ if $k_2\ge 2)$, then so is 
$\theta^{\uk}_3(f)$;
\item if $f\in M_{\uk}(N)$ is a Hecke eigen cusp form outside $pN$ and 
$\theta^{\uk}_3(f)$ is not identically zero, then $\br_{\theta^{\uk}_3(f),p}\simeq 
\overline{\chi}_p\otimes \br_{f,p}
$ for the corresponding mod $p$ Galois representations of $G_\Q$. 
\end{enumerate}
\end{thm}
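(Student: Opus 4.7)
The plan is to construct $\theta^{\uk}_3$ case by case according to the value of $r := k_1 - k_2$, by following Section 3.3 of \cite{yam} for the construction and then verifying that the same arguments carry over to any prime $p$ (including $p = 2, 3$). The three regimes to treat are $r = 0$ (the scalar weight case), $r = 1$ (the first vector-valued case), and $r \ge 2$. In each regime I would start from the pre-theta operator $\widetilde{\Theta}_{\uk}$ of Section \ref{tp1} but compose the Gauss-Manin output with an $\mathrm{Aut}(\E)$-equivariant projection \emph{different} from $p_1$: when $r \ge 2$ I would use the explicit projection of Appendix (7.1) in \cite{yam}, which lands in a summand corresponding to a weight shift of $(2,0)$; for $r = 0$ I would use the construction of $\widetilde{\theta}$ in Section 3.3.1; and for $r = 1$ I would use the formulas (3.32)--(3.33) in Section 3.3.2 of loc.cit. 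In all three cases the resulting operator is a priori defined only on the ordinary locus, and I extend it holomorphically to all of $S_{N,p}$ by multiplying by $H_{p-1}$, producing an $\bF_p$-linear map of target weight $\uk + (p+1, p-1) = \uk + (2, 0) + (p-1, p-1)$.

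The $\bF_p$-linearity and cuspidality preservation follow at once from the linearity of $\nabla$ and of the projections, together with the $q$-expansion principle. For property (1), Hecke compatibility away from $pN$ (respectively away from $N$ when $k_2 \ge 2$, thanks to the canonical ordinary filtration available in that weight range) reduces to the fact that $\nabla$, the unit-root splitting, and $H_{p-1}$ are all preserved by prime-to-$p$ Hecke correspondences. For property (2), once (1) is granted, it suffices by Chebotarev to match Hecke eigenvalues at primes $\ell \nmid pN$; as in Proposition 3.10 of \cite{yam}, inspection of the action of $\theta^{\uk}_3$ on $q$-expansions shows that the $T_\ell$-eigenvalue is multiplied by $\ell$, which matches on the Galois side a single twist by $\ochi_p$.

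The main obstacle I expect is verifying, uniformly in $p$, the holomorphic extension to the non-ordinary locus. In \cite{yam} the construction is written under the assumption $p \ge 5$, and at the small primes $p = 2, 3$ one must check, in the spirit of the proof of Theorem \ref{extension1}, that the potential poles along the zero locus of $\det(A) = H_{p-1}$ are cancelled by a single factor of $H_{p-1}$ rather than requiring $H_{p-1}^2$. Because each of $\widetilde{\theta}$, the (3.32)--(3.33) construction, and the Appendix (7.1) projection produces local formulas which are \emph{linear} rather than quadratic in the sections $c_{ij}$ of (\ref{C}) (the quadratic terms being precisely what forced the $H_{p-1}^2$ normalisation for $\Theta_{\uk}$ when $r > 1$), one expects Lemma \ref{detA} to suffice to absorb the single pole, making $\theta_3$ available in every characteristic and for every weight with $k_1 \ge k_2$.
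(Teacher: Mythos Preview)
Your proposal is correct and follows essentially the same approach as the paper, which defers the argument to Proposition~3.13 of \cite{yam} after recalling (in the paragraph preceding the theorem) precisely the case-split construction you describe: the Appendix~(7.1) projection for $r\ge 2$, $\widetilde{\theta}$ for $r=0$, and (3.32)--(3.33) for $r=1$. One small imprecision: you write that you ``start from the pre-theta operator $\widetilde{\Theta}_{\uk}$,'' but $\widetilde{\Theta}_{\uk}$ already involves two applications of $\nabla$; what you want (and what your later remarks about linearity in the $c_{ij}$ make clear you intend) is to stop after a single application, i.e.\ to project from $\Psi^{(1)}_{\uk}$ rather than from $\Psi^{(2)}_{\uk}$.
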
 
\begin{proof}The argument in the proof of Proposition 3.13 of \cite{yam} works also for 
this setting. 
\end{proof}

Let $S_{(0,0)}$ be the superspecial locus of $S_{N,p}$. 
\begin{thm}\label{nonvanishing-theta3}Put $r=k_1-k_2$. 
Let $f\in M_{\uk}(N)$ be a non-zero element satisfying $f|_{S_{(0,0)}}\not\equiv 0$. 
Then $\theta^{\uk}_3(f)$ is not identically zero. 
\end{thm}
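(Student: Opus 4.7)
The plan is to argue by contrapositive: assuming $\theta^{\uk}_3(f)\equiv 0$, I will show that $H_{p-1}\mid f$ in $M_{\uk}(N)$, whence by definition of the filtration $w(f)\preceq \uk-(p-1,p-1)\prec\uk$ in the lexicographic order, contradicting $\uk=w(f)$.

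First I would recall the precise construction of $\theta^{\uk}_3$ from Sections 3.3.1, 3.3.2 and the Appendix of \cite{yam}. In each of the regimes $r=0$, $r=1$, $r\ge 2$ the operator is obtained by composing the Gauss--Manin-induced map $\E_{\uk}\to \mathbb{V}_{\uk}\otimes{\rm Sym}^2\E$ with an explicit ${\rm Aut}_{\O_{S_{N,p}}}(\E)$-equivariant projection landing in $\E_{\uk+(p+1,p-1)}$. A key point is that, unlike the big theta $\Theta_{\uk}$ of Definition \ref{theta-op}, no auxiliary $H_{p-1}$ or $H_{p-1}^2$ factor is needed to secure holomorphic extension, because the shift $(p+1,p-1)=(2,0)+(p-1,p-1)$ already absorbs the relevant pole structure. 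Repeating the type of local computation carried out for $\widetilde{\Theta}_{\uk}$ in Proposition \ref{local} yields an explicit formula for the coefficients $B_n$ in the expansion $\theta^{\uk}_3(f)=\sum_{n=0}^{r} B_n\,\delta_n\otimes(\text{local frame})$, polynomial in the $F_n$, the $\nabla_{ij}(F_n)$, and the entries $c_{ij}$ of $C=BA^{-1}$, and crucially with no overall $H_{p-1}$-denominator.

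Next, assuming every $B_n$ vanishes identically on an affine open $U\subset S_{N,p}$ over which $\E$ is trivial and $\det(A)$ is invertible, I would use Lemma \ref{detA}, Lemma \ref{cc} and Lemma \ref{c21} to eliminate the $\nabla_{ij}(c_{kl})$ and enforce $c_{12}=c_{21}$. The resulting relations between the $F_n$ can then be rearranged into the form $\det(A)\mid F_n$ in $\Gamma(U,\O_{S_{N,p}})$ for every $n$. To promote this local divisibility to a genuine factorization $f=H_{p-1}\cdot g$ with $g\in M_{\uk-(p-1,p-1)}(N)$, I would invoke the $q$-expansion principle at a cusp together with the classical fact that $H_{p-1}$ has constant $q$-expansion $1$: the quotient $f/H_{p-1}$ then extends uniquely across the supersingular locus and across the boundary, delivering the desired contradiction.

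The main obstacle is the algebraic step of extracting $\det(A)$ as a factor of each $F_n$ from the system $B_n\equiv 0$. For $r=0$ and $r=1$ this is essentially a direct rearrangement of formulas analogous to Corollary \ref{hol1} and Corollary \ref{hol2}, but for general $r\ge 2$ the cross-coefficient terms involving $F_{n\pm 1}$ and $F_{n\pm 2}$ make the bookkeeping delicate, and one must use the Leibniz-type identities of Lemma \ref{cc} repeatedly to collect the $\det(A)$ factor. Should this direct approach prove too intricate, an alternative is to expand $\theta^{\uk}_3(f)$ in a Fourier--Jacobi series at a cusp and show that some Fourier coefficient of $\theta^{\uk}_3(f)$ is necessarily non-zero as soon as some coefficient of $f$ is a unit modulo $H_{p-1}$, a condition which is exactly the hypothesis $\uk=w(f)$.
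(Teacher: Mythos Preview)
Your proposal takes a genuinely different route from the paper and leaves the decisive step unresolved. The paper does not argue by contrapositive through global divisibility. Instead it isolates a \emph{single} coefficient of $\theta^{\uk}_3(f)$ and shows directly that it cannot vanish. For $r\ge 2$ the relevant coefficient (in the basis $f^{(0)}_{r+2}$ of the target) is exactly
\[
b^{(r)}_2=\nabla_{11}(F_r)-k_2F_rc_{11}-F_{r-1}c_{12}
\]
from Proposition~\ref{phi1}. The hypothesis $\uk=w(f)$ is used, via the ${\rm GL}_2(\F_p)$-action on $\E_{\uk}$, to arrange that $F_{r-1}$ is \emph{not} divisible by $H_{p-1}$. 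A local deformation computation at a superspecial point, as in the proof of Theorem~4.7 of \cite{yam}, then forces $b^{(r)}_2\neq 0$: the term $F_{r-1}c_{12}$ contributes a genuine pole along $H_{p-1}=0$ that the remaining terms cannot cancel. The cases $r\in\{0,1\}$ are handled by the same mechanism applied to the explicit formulas for $\theta$ and to (3.33) of \cite{yam}.

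The gap in your plan is precisely the step you flag as the ``main obstacle'': extracting $\det(A)\mid F_n$ for all $n$ from the system $B_n\equiv 0$. These are differential relations, not algebraic ones, and on the ordinary open $U$ where you work $\det(A)$ is a unit, so nothing can be extracted there; the sought divisibility is a statement about behavior along the non-ordinary locus, and to access it you are forced into exactly the kind of pointwise local expansion the paper performs. Your fallback via Fourier expansion at a cusp cannot rescue this either: since $H_{p-1}(q_N)=1$ and $c_{ij}(q_N)=0$, the $q$-expansion sees neither divisibility by $H_{p-1}$ nor the pole contributions of the $c_{ij}$, so the hypothesis $\uk=w(f)$ is invisible there and cannot be converted into non-vanishing of a Fourier coefficient of $\theta^{\uk}_3(f)$. (A smaller point: since $\theta^{\uk}_3$ lands in $\E_{\uk+(p+1,p-1)}$, the target has rank $r+3$, so the expansion of $\theta^{\uk}_3(f)$ is indexed by $0\le n\le r+2$, not $0\le n\le r$.)
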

\begin{proof}
We assume $r=k_1-k_2\ge 2$. 
By definition, the coefficient of  $\theta^{\uk}_3(f)$ in the basis $f^{(0)}_{r+2}$ in 
the notation of (7.1) in Appendix of \cite{yam} is 
nothing but 
$$b^{(r)}_2=\nabla_{11}(F_r)-k_2 F_r c_{11}-F_{r-1}c_{12}$$
in the notation of Proposition \ref{phi1}. 
Applying the action of ${\rm GL}_2(\bF_p)$ on $\E_{\uk}$ if necessary, 
we may assume $F_{r-1}$ is non-zero at some point of  $S_{(0,0)}$ by assumption. 
As in the proof of Theorem 4.7 of \cite{yam}, by using the local deformation at some point, 
we see that $F_{r-1}c_{12}$ is non-zero and $b^{(r)}_2$ as well.

When $k_1-k_2\in\{0,1\}$, we apply a similar argument to $\theta$ 
(see Proposition 3.5 of \cite{yam}) and (3.33) of loc.cit..
\end{proof}

\section{The theta cycle}\label{the-theta-cycle}
In this section we study the theta cycle defined by $\Theta_{\uk}$. 
\subsection{Non-vanishing results}
Let $r=k_1-k_2$. 
For each $F=\ds\sum_{n=0}^{r}F_n\delta_n\in M_{\uk}(N)$ we denote by $$F(q_N):=
\sum_{n=0}^{r}\sum_{T\in {\rm Sym^2}(\Z)}A_{F_n}(T)q^T_N\delta_n=
\sum_{n=0}^{r}\sum_{T\in {\rm Sym^2}(\Z)_{\ge 0}}A_{F_n}(T)q^T_N\delta_n=
\sum_{T\in {\rm Sym^2}(\Z)_{\ge 0}}\Big(\sum_{n=0}^{r}A_{F_n}(T)\delta_n\Big)q^T_N$$ 
the $q$-expansion of $F$ at the Mumford's semi-abelian scheme over 
$\bF_p[[q^{1/N}_{11},q^{1/N}_{12},q^{1/N}_{22}]]$ (cf. Section 2.5 of \cite{yam}). 
Here ${\rm Sym^2}(\Z)=
\Bigg\{
\begin{pmatrix}
2a & b \\
b & 2c
\end{pmatrix}\ \Bigg|\ a,b,c\in \Z  \Bigg\}$ and 
${\rm Sym^2}(\Z)_{\ge 0}=\{T\in{\rm Sym^2}(\Z)\ |\ T\ge 0  \}$. 
For each $T=
\begin{pmatrix}
2a & b \\
b & 2c
\end{pmatrix}$, we write 
$q^T_N=q^{a/N}_{11}q^{b/N}_{12}q^{c/N}_{22}$. Since $H^M_{p-1}F$ is liftable to 
a (holomorphic) Siegel modular form of characteristic zero for a sufficiently large $M$ and $H_{p-1}(q_N)=1$, 
the coefficient $A_F(T):=\ds\sum_{n=0}^{r}A_{F_n}(T)\delta_n$ vanishes unless $T\in {\rm Sym^2}(\Z)_{\ge 0}$. 
\begin{dfn}\label{p-singular}
An element $F\in M_{\uk}(N)$ is said to be 
weakly $p$-singular if $A_F(T)=0$ for any 
$T\in {\rm Sym^2}(\Z)_{\ge 0}$ with $p\nmid \det(T)$. 
\end{dfn}
This notation is slightly different from $p$-singular forms in \cite{BK}. 
The author expects that Hecke eigen weakly $p$-singular forms can be characterized by means of the image of the corresponding mod $p$ Galois representations. 

\begin{thm}\label{non-vanishing}Let $F=\ds\sum_{i=0}^r F_n\delta_n\in M_{\uk}(N)$. 
It holds:
\begin{enumerate}
\item if $F$ is not weakly $p$-singular,  then 
$\Theta_{\uk}(F)$ is neither  zero nor weakly $p$-singular;
\item if $p>2$, $(k_1,k_2)=(k,k)$, and 
$F|_{S_{(0,0)}}\not\equiv 0$, then $\Theta_{\uk}(F)|_{S_{(0,0)}}$ is not identically zero unless $p|k(2k-1)$ and 
;
\item if $p>2$, $(k_1,k_2)=(k+1,k)$, and 
$F|_{S_{(0,0)}}\not\equiv 0$, then $\Theta_{\uk}(F)|_{S_{(0,0)}}$ is not identically zero  unless $p|(k+1)(2k-1)$;
\item if $p>2$ and $k_1-k_2>1$ and $F|_{S_{(0,0)}}\not\equiv 0$, then  $\Theta_{\uk}(F)|_{S_{(0,0)}}$ is not identically zero.  
\end{enumerate}
\end{thm}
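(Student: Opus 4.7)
The four statements are treated by complementary local analyses based on Proposition~\ref{local} and its scalar specializations Corollaries~\ref{hol1} and \ref{hol2}.

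For (1), I work with the $q$-expansion at the Mumford semi-abelian scheme. Taking the canonical basis of $\E$ induced by the invariant differentials on the split toric part, the Frobenius is canonical, so $A$ in (\ref{AB}) reduces to the identity matrix and $B$ to zero; hence $H_{p-1}(q_N)=1$ and $c_{ij}(q_N)=0$. Under this reduction, Proposition~\ref{local} collapses to $A_n = (4\nabla_{11}\nabla_{22}-\nabla_{12}^{2})(F_n)$, and on a Fourier monomial $q^{T}_N$ with $T=\begin{pmatrix}2a&b\\b&2c\end{pmatrix}$ each $\nabla_{ij}$ acts as multiplication by the corresponding entry of $T$, so the operator above acts by a fixed unit times $\det(T)\bmod p$. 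Therefore $A_{\Theta_{\uk}(F)}(T)$ is a unit multiple of $\det(T)\,A_F(T)$, which is nonzero whenever $p\nmid\det(T)$ and $A_F(T)\neq 0$. The non-weak-$p$-singularity of $F$ supplies such a $T$, yielding both the non-vanishing and the non-weak-$p$-singularity of $\Theta_{\uk}(F)$.

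For (2) and (3), pick $x\in S_{(0,0)}$ with $F(x)\neq 0$. At a superspecial point of $\GSp_4$, the Hasse matrix $A$ of (\ref{AB}) is identically zero (not merely its determinant), so $\mathrm{adj}(A)(x)=0$ and each regularized quantity $n_{ij}:=\det(A)\,c_{ij}=(B\,\mathrm{adj}(A))_{ij}$ vanishes at $x$. Multiplying the formulas of Corollaries~\ref{hol1} and \ref{hol2} by $H_{p-1}=\det(A)$ to pass from $\widetilde{\Theta}_{\uk}$ to $\Theta_{\uk}$ and then evaluating at $x$, every derivative term is killed by $\det(A)(x)=0$ and every $c_{ij}$-term by $n_{ij}(x)=0$; what survives is the purely algebraic piece $2k(2k-1)\det(B)(x)F_0(x)$ in case $r=0$, respectively the pair $2(k+1)(2k-1)\det(B)(x)F_n(x)$, $n=0,1$, in case $r=1$. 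Since $A(x)=0$ and the crystalline Frobenius $F^{*}$ on $H^{1}_{\mathrm{dR}}$ at a superspecial abelian surface induces an isomorphism from the non-Hodge complement onto the Hodge filtration, the matrix $B(x)$ is non-degenerate, i.e.\ $\det(B)(x)\neq 0$. Hence $\Theta_{\uk}(F)(x)\neq 0$ whenever $p\nmid k(2k-1)$ (resp.\ $p\nmid(k+1)(2k-1)$), giving (2) and (3).

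For (4), assume $r\geq 2$ and pick $y$ on the non-ordinary locus, outside the superspecial locus, with $F(y)\neq 0$; such a $y$ exists because $F$ is not divisible by $H_{p-1}$. Since $\Theta_{\uk}(F)=H_{p-1}^{2}\,\widetilde{\Theta}_{\uk}(F)$, divisibility of $\Theta_{\uk}(F)$ by $H_{p-1}$ is equivalent to $\widetilde{\Theta}_{\uk}(F)$ having a pole of order at most $1$ along $\{H_{p-1}=0\}$ at $y$. The order-$2$ pole in Proposition~\ref{local} comes precisely from the purely quadratic monomials in $c_{ij}=n_{ij}/\det(A)$: its leading symbol is a quadratic form in $n_{11},n_{12},n_{22}$ whose coefficients are drawn from $F_{n-2},F_{n-1},F_n,F_{n+1},F_{n+2}$ with explicit combinatorial weights, including the pure $-r(r-1)\,c_{12}^{2}\,F_n$ contribution, whose scalar $r(r-1)$ is a unit modulo $p$ in the relevant range. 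The decisive step, and the main obstacle, is to rule out accidental cancellation of this quadratic form at $y$: one applies a local infinitesimal deformation of $F$ in the formal neighborhood of $y$, exactly as in the proof of Theorem~4.7 of \cite{yam}, now tracking five distinct monomials in $n_{11},n_{12},n_{22}$ instead of one, and concludes that the pole has exact order $2$ for at least one index $n$. This yields (4).
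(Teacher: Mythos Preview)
Your arguments for (1)--(3) match the paper's approach. For (1) the paper computes the $q$-expansion exactly as you do, obtaining $\Theta_{\uk}(F)(q_N)=N^{-2}\sum_T\det(T)A_F(T)q_N^T$. For (2) and (3) the paper simply defers to Corollaries~\ref{hol1}, \ref{hol2} and the argument in Theorem~4.7-(2) of \cite{yam}; your explicit evaluation at a superspecial point $x$---using $A(x)=0$, hence $n_{ij}(x)=(B\,\mathrm{adj}(A))_{ij}(x)=0$, together with $\det(A)\det(C)=\det(B)$ and $\det(B)(x)\neq 0$---is essentially what that reference provides.

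Part (4), however, diverges from the paper and contains a genuine gap. The paper does \emph{not} work at a non-superspecial point of the non-ordinary locus; it works at a \emph{superspecial} point $X\in S_{(0,0)}$, first using the $\mathrm{GL}_2(\bF_p)$-action on $\E_{\uk}$ to arrange $\alpha:=F_2(X)\neq 0$. It then expands $H_{p-1}^2 A_0$ in local parameters $t_{11},t_{12},t_{22}$ at $X$ and shows that the term $-(1)(2)\,c_{11}^2F_2$ in Proposition~\ref{local} contributes $-2\alpha\, t_{11}^2$ modulo $m_R^3$, which is manifestly nonzero and (as invoked in the proof of Theorem~\ref{fil}) does not lie in the local ideal $(t_{11}t_{22}-t_{12}^2)$ cutting out the non-ordinary locus. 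Your approach instead attempts to detect an exact order-$2$ pole of $\widetilde{\Theta}_{\uk}(F)$ at a point $y$ of $p$-rank one. This fails as written: the coefficient $r(r-1)$ is \emph{not} a unit modulo $p$ when $r\equiv 0$ or $1\pmod p$, so your highlighted monomial can vanish; and even when it does not, the full quadratic form in $(n_{11},n_{12},n_{22})$ evaluated at the single vector $(n_{ij}(y))$ may well vanish for the given $F$. Your ``infinitesimal deformation of $F$'' does not resolve this, since deforming $F$ changes the hypothesis while deforming $y$ along $\{H_{p-1}=0\}$ moves the coefficients $F_n$ and the vector $(n_{ij})$ simultaneously. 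The paper's strategy at $X\in S_{(0,0)}$ sidesteps all of this: the constant $-2$ is always a unit (since $p>2$), the $\mathrm{GL}_2$-action guarantees $\alpha\neq 0$, and the $n_{ij}$'s serve as local coordinates so that the quadratic monomials are linearly independent in $m_R^2/m_R^3$.
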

\begin{proof}
Let $F(q_N)=\ds\sum_{T\in {\rm Sym^2}(\Z)_{\ge 0}}A_F(T)q^T_N$ be the $q$-expansion of $F$. 
Since $H_{p-1}(q_N)=1$, $c_{ij}(q_N)=0$ by Lemma \ref{detA}. 
By Proposition \ref{local} we see that 
$$
\Theta_{\uk}(F)(q_N)=\sum_{n=0}^r\det\begin{pmatrix}
2q_{11}\frac{d}{dq_{11}} & q_{12}\frac{d}{dq_{12}}  \\
q_{12}\frac{d}{dq_{12}} & 2q_{22}\frac{d}{dq_{22}}
\end{pmatrix}(F_n)\delta_n=\frac{1}{N^2}\sum_{T\in {\rm Sym^2}(\Z)_{\ge 0}\atop p\nmid \det(T)}\det(T)A_F(T)q^T_N.
$$
The first claim follows from this formula. 

The second and the third claims follow from the argument in the proof of 
Theorem 4.7-(2) in \cite{yam} and Corollary \ref{hol1} and \ref{hol2}. 

For the last claim, 
by assumption, we may assume $\alpha:=F_2(X)\in \bF_p$ is non-zero for some $X\in S_{(0,0)}$ 
by using the action of ${\rm GL}_2(\bF_p)$ on $\E_{\uk}$ if necessary. 
Let $t_{11},t_{12},t_{22}$ be local parameters of $S_{N,p}$ at $X$. 
Let $R=\bF_p[[t_{11},t_{12},t_{22}]]$ with the maximal ideal $m_R$ and $I_X:=(t_{11}t_{22}-t^2_{12})R$. 
Let $A_n(X)\in R$ be the local expansion at $X$ where $A_n$ is the coefficient in 
Proposition \ref{local}. 
By the argument in the proof of 
Theorem 4.7-(2) in \cite{yam} again and by using $c_{ij}=(t_{11}t_{22}-t^2_{12})^{-1}t_{ij}$ for $1\le i\le j\le 2$ with 
$H^2_{p-1}(X)=t_{11}t_{22}-t^2_{12}$, 
\begin{equation}\label{local-exp}H^2_{p-1}(X)A_0(X)\equiv \beta_1 t^2_{12}+\beta_2 t_{12}t_{22}+\beta_3t_{12}t_{11}+
\beta_4t^2_{22}-2\alpha t^2_{11}\ {\rm mod}\ m^3_R
\end{equation}
for some $\beta_i\in \bF_p$ ($1\le i\le 4$). 
The right hand side is clearly non-zero (because of the non-zero term $2\alpha t^2_{11}$ and $p>2$) and it yields the claim. 
\end{proof}

\subsection{Filtration}\label{filtration}
We follow the contents in Section 6 of \cite{yam}. 
For each $f\in M_{\uk}(N)$ we recall the filtration $w(f)$ defined by (\ref{w(f)}).

For a multiple $M$ of $N$, let $\mathbb{T}_{M}$ be the (abstract) Hecke ring over $\Z$ outside $M$ acting on 
$M_{\uk}(N)$. 
Put 
$$\mathbb{T}=\left\{
\begin{array}{cc}
\mathbb{T}_{N}  & \text{if $k_2\ge 2$}\\
\mathbb{T}_{Np}  & \text{if $k_2< 2$}.
\end{array}\right.
$$
Then one can define the usual Hecke action of $\mathbb{T}$ on $M_{\uk}(N)$. 
The assumption $k_2\ge 2$ is necessary to guarantee that 
the formal Hecke action is defined over $\Z$ (in general 
the factor $p^{k_2-2}$ appears in the formula, cf. (2.10) of \cite{yam}).

We set the following convention for Hecke eigen forms $f_1\in M_{\uk}(N)$ and $f_2\in M_{\uk'}(N)$ where 
two weights $\uk$ and $\uk'$ are allowed to be different:  
\begin{equation}\label{conv1}
w(f_1)=w(f_2) \mbox{ if $p>2$ and the Hecke eigensystems of $f_1$ and $f_2$ for $\mathbb{T}$ are equal.}
\end{equation}
Recall 
$$m_{\uk}:=\left\{\begin{array}{ll}
(0,0) & \text{if $p=2$} \\
(p-1,p-1) & \text{if $p>2$ and $k_1-k_2\le 1$}\\
(2p-2,2p-2)  & \text{if $p>2$ and $k_1-k_2> 1$.}
\end{array}\right.
$$
We now study the filtration under $\Theta_{\uk}$. 
Henceforth we sometimes write $\Theta=\Theta_{\uk}$ for simplicity and accordingly, 
$\Theta^2$ means $\Theta_{\uk+(2,2)+m_{\uk}}\circ \Theta_{\uk}$. Similarly, for each integer 
$j\ge 1$, $\Theta^j$ is also inductively defined in that sense. 
For pairs $(a,b),(c,d)\in \Z^2$ with $a-b=c-d$. We write $(a,b)\le (c,d)$ if $b\le d$. 
The equality holds exactly when $(a,b)=(c,d)$. 
\begin{thm}\label{fil}
Suppose that $f\in M_{\uk}(N)$ is not weakly $p$-singular. Suppose further that $f$ is not 
identically zero on $S_{(0,0)}$ if $p>2$.  
Then 
\begin{enumerate}
\item $w(\Theta_{\uk}(f))\le w(f)+(2,2)+m_{\uk}$ for any $p$; 
\item suppose $p>2$ and then it holds   
\begin{itemize}
\item if $(k_1,k_2)=(k,k)$, the equality of {\rm (1)} and $\Theta_{\uk}(f)|_{S_{(0,0)}}\not\equiv 0$ 
hold unless $p|k(2k-1)$;
\item if $(k_1,k_2)=(k+1,k)$, the equality of {\rm (1)}  and $\Theta_{\uk}(f)|_{S_{(0,0)}}\not\equiv 0$ 
hold unless $p|(k+1)(2k-1)$;
\item if $k_1-k_2>1$, $\Theta_{\uk}(f)|_{S_{(0,0)}}\not\equiv 0$  and the equality of {\rm (1)}  always holds; 
\end{itemize}
\item $w(\Theta^{\frac{p+1}{2}}(f))=w(\Theta(f))$ if $p>2$ and 
$w(\Theta(f))=w(f)+(2,2)$ if $p=2$.  
\end{enumerate}
\end{thm}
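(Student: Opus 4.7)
My plan is to dispatch (1) and (2) via the $q$-expansion formula from Proposition \ref{local} together with Theorem \ref{non-vanishing}, and then to tackle (3) by splitting into $p>2$ (where I invoke a Galois-theoretic argument and the convention (\ref{conv1})) and $p=2$ (which needs a more direct computational analysis).

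For (1), I would factor $f=H_{p-1}^{m_0}\cdot g$ with $g\in M_{w(f)}(N)$ and $H_{p-1}\nmid g$; note that $m_{w(f)}=m_{\uk}$ because the difference $k_1-k_2$ is preserved under stripping $H_{p-1}$. On the Mumford $q$-expansion chart the local sections $c_{ij}$ vanish by Lemma \ref{detA} (using $H_{p-1}(q_N)=1$), so the formula of Proposition \ref{local} collapses to $\Theta(F)(q_N)=\frac{1}{N^2}\sum_T \det(T)A_F(T)q_N^T$, which depends on $F$ only through its $q$-expansion. Since $f(q_N)=g(q_N)$, the forms $\Theta_{\uk}(f)$ and $H_{p-1}^{m_0}\Theta_{w(f)}(g)$ share the same $q$-expansion and both lie in $M_{\uk+(2,2)+m_{\uk}}(N)$; the vector-valued $q$-expansion principle forces equality, giving $w(\Theta_{\uk}(f))\le w(f)+(2,2)+m_{\uk}$.

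For (2), the hypothesis $f|_{S_{(0,0)}}\not\equiv 0$ forces $H_{p-1}\nmid f$, so $w(f)=\uk$; equality in (1) is then equivalent to $H_{p-1}\nmid \Theta_{\uk}(f)$. This is exactly what Theorem \ref{non-vanishing} delivers: parts (2) and (3) cover $r\in\{0,1\}$ under the conditions $p\nmid k(2k-1)$ (resp.\ $p\nmid(k+1)(2k-1)$) by securing non-vanishing on $S_{(0,0)}$, and part (4) directly handles $r>1$.

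For (3) with $p>2$, iterated use of Theorem \ref{non-vanishing}(1) shows that each $\Theta^j(f)$ is a non-zero Hecke eigencuspform, and iterating Theorem \ref{extension1}(2) gives $\br_{\Theta^j(f),p}\sim\ochi_p^{2j}\otimes\br_{f,p}$. Since $\ochi_p^{p-1}=1$, we have $\ochi_p^{p+1}=\ochi_p^{2}$, whence $\br_{\Theta^{(p+1)/2}(f),p}\sim\br_{\Theta(f),p}$. Matching Galois representations translates via Chebotarev into matching Hecke eigensystems on $\mathbb{T}$ (the Hecke ring stabilizing at $\mathbb{T}_N$ once one application of $\Theta$ pushes $k_2$ into the range $\ge 2$), so convention (\ref{conv1}) yields $w(\Theta^{(p+1)/2}(f))=w(\Theta(f))$. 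For $p=2$ the claim $w(\Theta(f))=w(f)+(2,2)$ reduces via (1) to showing $H_1\nmid \Theta_{w(f)}(g)$; my plan is to specialize Proposition \ref{local} and Corollaries \ref{hol1}--\ref{hol2} to characteristic $2$ (where $\Theta$ on $q$-expansion is essentially $\nabla_{12}^2$) and to run a local deformation argument in the spirit of Theorem \ref{non-vanishing}(2)--(4). The main obstacle is precisely this $p=2$ half of (3): convention (\ref{conv1}) is unavailable in characteristic $2$ and Theorem \ref{non-vanishing}(2)--(4) is stated only for $p>2$, so a tailored local computation at a point off the vanishing locus of $H_1$ is required to rule out spurious $H_1$-divisibility of $\Theta(f)$.
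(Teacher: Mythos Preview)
Your approach to (1), (2), and the $p>2$ case of (3) coincides with the paper's (which is far terser, saying only ``the inequality follows by definition'' for (1) and citing Theorem~\ref{extension1}(2) with convention~(\ref{conv1}) for (3)).

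For the $p=2$ case of (3) the paper takes a different and shorter route than your proposed local deformation. Reducing the formula of Proposition~\ref{local} modulo $2$ annihilates every term except $-\nabla_{12}^2(F_n)$ (all remaining integer coefficients are visibly even, including the $c_{ij}^2$ terms whose coefficients are products of consecutive integers), and Lemma~\ref{detA} together with Lemma~\ref{c21} gives $\nabla_{12}(H_{p-1})=-(c_{12}+c_{21})H_{p-1}=0$ in characteristic $2$; this yields the exact operator identity $\Theta(H_{p-1}F)=H_{p-1}\Theta(F)$, and the paper declares that the claim follows. Your local deformation plan is aimed at the sharper target $H_1\nmid\Theta(g)$ whenever $H_1\nmid g$, which is what one actually needs for the \emph{equality} $w(\Theta(f))=w(f)+(2,2)$; the paper's commutativity argument, read literally, delivers only the inequality already contained in (1). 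So the paper's route is shorter, while yours---if the deformation computation can be made to work in characteristic~$2$---would close a gap the paper leaves open.
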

\begin{proof}The inequality follows by definition. 
As explained in the proof of Theorem  \ref{non-vanishing}, it follows from 
the proof of Theorem 4.7-(2) in \cite{yam}  that 
$\Theta_{\uk}(f)$ is non-zero at some point in $S_{(0,0)}$ under the assumption on $\uk$. 
Further, under the assumption the equality holds in the case when $k_1-k_2\in\{0,1\}$ since the Hasse invariant is identically zero on $S_{(0,0)}$. 
When $k_1-k_2>1$, the local expansion (\ref{local-exp}) shows that 
$\Theta(f)$ is not a multiple of the Hasse invariant (otherwise, 
the right hand side of (\ref{local-exp}) has to be a multiple of $t_{11}t_{22}-t^2_{12}$ in 
the notation there). 

The second claim is a consequence of Theorem \ref{extension1}-(2) with the convention (\ref{conv1}) when $p>2$. 
When $p=2$, by Proposition \ref{local} and Lemma \ref{detA} it is easy to see 
$\Theta(H_{p-1}F)=H_{p-1}\Theta(F)$. The claim follows from this. 
\end{proof} 

\subsection{A new theta cycle}
Keep the notation in the previous subsection.
For each $f\in M_{\uk}(N)$ satisfying the assumption in Theorem \ref{fil}. 
The theta cycle of $f$ with respect to $\Theta_{\uk}$ is defined by 
\begin{equation}\label{cycle}
{\rm Cyc}(f):=
\left\{\begin{array}{ll}
(w(\Theta(f)),w(\Theta^2(f)),\ldots,w(\Theta^{\frac{p-1}{2}}(f))) & \text{if $p>2$}\\
(w(\Theta(f))) & \text{if $p=2$}
\end{array}\right.
\end{equation}
For the second projection $p_2:\Z^2\lra \Z,\ (x,y)\mapsto y$, we also define 
\begin{equation}\label{cyclep2}
p_2({\rm Cyc}(f)):=
\left\{\begin{array}{ll}
(p_2(w(\Theta(f))),p_2(w(\Theta^2(f))),\ldots,p_2(w(\Theta^{\frac{p-1}{2}}(f)))) & \text{if $p>2$}\\
(p_2(w(\Theta(f)))) & \text{if $p=2$}
\end{array}\right.
\end{equation}
Notice that any $\Theta^j(f)$ in the above cycle is not identically zero by 
Theorem \ref{non-vanishing}.
Since $w(\Theta(f))=w(\Theta^{\frac{p+1}{2}}(f))$ if $f$ is a Hecke eigenform and $p>2$ under our convention (\ref{conv1}) 
and $w(\Theta^2(f))=w(\Theta(f))$ if $p=2$, actually it makes up a ``cycle" in 
some sense.  

When $p=2$ or $k_1-k_2>1$, the theta cycle is easily computed by 
Theorem \ref{fil}. 
Therefore, we focus on the case when $k_1-k_2\in\{0,1\}$ and $p>2$.
\begin{dfn}\label{low-pt}Assume $p>2$. 
Let $f$ be an element in $M_{\uk}(N)$ satisfying the assumption in Theorem \ref{fil}.  
 Suppose $(k_1,k_2)=(k,k)$ or $(k+1,k)$. Put $r=k_1-k_2$ so that $r\in \{0,1\}$.  
\begin{enumerate}
\item  
We say $\Theta^i(f)$ is a low point of the first type $($resp. the second type$)$ if 
 $p_2(w(\Theta^{i-1}(f)))+r\equiv 0\ {\rm mod}\ p$ $($resp. $2p_2(w(\Theta^{i-1}(f)))-1\equiv 0\ {\rm mod}\ p)$. 
If $f_i:=\Theta^{c_i}(f)$ is a low point  for some integer $c_i>0$, then the number $c_i-1$ means one of times we add $(p+1,p+1)$ to 
$w(f)$. We say $c_i$ the low number of the low point $\Theta^{c_i}(f)$. We say $c_i$ the low number for $f_i$. 
We write $c_i=c^{(1)}_i$ $($resp. $c_i=c^{(2)}_i)$ if 
the low point is of the first type $($resp. the second type$)$. 

\item We define the number $b_i$ such that 
\begin{equation}\label{bi}
b_i(p-1)=p_2(w(\Theta^{c_i-1}f))+(p+1)-
p_2(w(\Theta^{c_i}f))
\end{equation}
which means the amount falling the filtration at the low point $f_i$ with the next application of 
$\Theta$. 
We say $b_i$ the jumping number of the low point $\Theta^{c_i}(f)$.  
As in $c_i$, we also write $b_i=b^{(1)}_i$ or $b_i=b^{(2)}_i$ according to the first type or the second type respectively. 
\end{enumerate}
\end{dfn}
We illustrate the notion of low points as below. 
$$
\xymatrix{
f  \ar[r] & \Theta(f) \ar[r]&  \cdots \ar[r] & \ar@//[dlll]  \Theta^{c_1^{(j_1)}-1}(f)   \\
f^{(j_1)}_1:=\Theta^{c_1^{(j_1)}}(f)  \ar[r] & \Theta(f^{(j_1)}_1) \ar[r]&  \cdots \ar[r] & \ar@//[dlll]  \Theta^{c_2^{(j_2)}-1}(f^{(j_1)}_1)   \\
\cdots  \ar[r] &\cdots \ar[r]&  \cdots \ar[r] & \cdots  \\   
} 
$$
where $j_1,j_2,\ldots\in\{1,2\}$. 
The variant of the filtration goes as follows:
$$\small{
\xymatrix{
k:=p_2(w(f))  \ar[r] & p_2(w(\Theta(f)))=k+(p+1) \ar[r]&  \cdots \ar[r] & 
\ar@//[dlll]_{\text{The weight falls $b^{(j_1)}_1(p-1)$}}   p_2(w(\Theta^{c_1^{(j_1)}-1}(f)))=k+(c_1^{(j_1)}-1)(p+1)   \\
p_2(w(f^{(j_1)}_1))  \ar[r] & p_2(w(\Theta(f^{(j_1)}_1))) \ar[r]&  \cdots \ar[r] & 
\ar@//[dlll]_{\text{The weight falls $b^{(j_2)}_2(p-1)$}}   
p_2(w(\Theta^{c_2^{(j_2)}-1}(f^{(j_1)}_1)))   \\
\cdots  \ar[r] &\cdots \ar[r]&  \cdots \ar[r] & \cdots.  \\   
}}
$$    
Then the number $c_1^{(j_1)}$ means 
that the 
first number such that 
\begin{equation}\label{p2c}
p_2(w(\Theta^{j}(f)))=p_2(w(f))+j(p+1)\  {\rm for}\ 0\le j\le c_1^{(j_1)}-1
\end{equation}
 and 
\begin{equation}\label{lowpoints}
\left\{\begin{array}{cc}
p_2(w(\Theta^{c_1^{(j_1)}-1}(f)))+r\equiv 0\ {\rm mod}\ p & \text{if $j_1=1$ 
(a low point of the first type)}\\
2p_2(w(\Theta^{c_1^{(j_1)}-1}(f)))-1\equiv 0\ {\rm mod}\ p & \text{if $j_1=2$  
(a low point of the second type)}.
\end{array}
\right.
\end{equation} 
Let $\{c^{(j_i)}_i\}_{i=1}^s$ (resp. $\{b^{(j_i)}_i\}_{i=1}^s$) be the collection of all low numbers (jumping numbers) for $f$. 
We define $f^{(j_i)}_i,1\le i\le s$ inductively such that  
$f^{(j_{i+1})}_{i+1}=\Theta^{c^{(j_{i+1})}_{i+1}}(f^{(j_i)}_i)$ and $f^{(j_1)}_1:=\Theta^{c_1^{(j_1)}}(f)$. 
Since the length of the theta cycle of $f$ is $\ds\frac{p-1}{2}$, one has 
\begin{equation}\label{sumc}
\ds\sum_{i=1}^s c^{(j_i)}_i=\ds\frac{p-1}{2}.
\end{equation}
The total amount of the varying weights in the theta cycle is $(p+1)\ds\frac{(p-1)}{2}$. 
It follows from this that $\ds\sum_{i=1}^s b^{(j_i)}_i(p-1)=(p+1)\frac{(p-1)}{2}.$ Hence we have 
\begin{equation}\label{sumb}
\ds\sum_{i=1}^s b^{(j_i)}_i=\frac{p+1}{2}. 
\end{equation}
Further, by definition we have
\begin{equation}\label{c-formula2}
p_2(w(\Theta^{c^{(j_{i+1})}_{i+1}-1}f^{(j_i)}_{i}))=p_2(w(f^{(j_i)}_{i}))+
(c^{(j_{i+1})}_{i+1}-1)(p+1),\ 
j_i,j_{i+1}\in\{1,2\}.
\end{equation} 
Further, the equations (\ref{bi}) and (\ref{lowpoints}) turn out to be 
\begin{equation}\label{bi2}
b^{(j_i)}_i(p-1)=p_2(w(\Theta^{c^{(j_i)}_i-1}f^{(j_{i-1})}_{i-1}))+(p+1)-
p_2(w(f^{(j_i)}_i))
\end{equation}
and for each $1\le i\le s$, 
\begin{equation}\label{lowpoints2}
\left\{\begin{array}{cc}
p_2(w(\Theta^{c_i^{(j_i)}-1}(f^{(j_{i-1})}_{i-1})))+r\equiv 0\ {\rm mod}\ p & \text{if $j_i=1$ (a low point of 
the first type)}\\
2p_2(w(\Theta^{c_i^{(j_i)}-1}(f^{(j_{i-1})}_{i-1})))-1\equiv 0\ {\rm mod}\ p & \text{if $j_i=2$  (a low point of 
the first type)}
\end{array}
\right.
\end{equation}  
respectively.

\subsubsection{The case when $r=0$}
In this case, the theta cycle ${\rm Cyc}(f)$ for $f\in M_{\uk}(N,\bF_p)$ with 
$k_2\ge 2$ was computed in Section 6.0.1-6.0.2 in \cite{yam}. 

\subsubsection{The case when $r=1$}
The computation is quite similar  to the case when $r=0$ but we give the details 
for reader's convenience. 

Let $f$ be a non-zero element in $M_{\uk}(N,\bF_p)$ with 
$k_2\ge 2$. 
It follows from (\ref{lowpoints2}) and (\ref{bi2}) that 
\begin{equation}\label{bi-cong}
\left\{\begin{array}{ll}
p_2(w(f^{(j_i)}_i))\equiv  b^{(j_i)}_i  \ {\rm mod}\ p & \text{if $j_i=1$}\\
p_2(w(f^{(j_i)}_i))\equiv  b^{(j_i)}_i+\frac{p+3}{2}\  {\rm mod}\ p & \text{if $j_i=2$}.
\end{array}
\right.
\end{equation} 
Further, the condition (\ref{lowpoints2}) and (\ref{c-formula2}) yield 
\begin{equation}\label{ci-cong}
p_2(w(f^{(j_i)}_i))+c^{(j_{i+1})}_{i+1}\equiv 
\left\{\begin{array}{cc}
 -1\ {\rm mod}\ p & \text{if $j_{i+1}=1$}\\
 \frac{p+1}{2}\  {\rm mod}\ p & \text{if $j_{i+1}=2$}.
\end{array}
\right.
\end{equation} 
It follows from (\ref{bi-cong}) and (\ref{ci-cong}) that 
\begin{equation}\label{cong-rel}
\left\{\begin{array}{ll}
\text{Case 1} & c^{(1)}_{i+1}+b^{(1)}_i \equiv 0\ {\rm mod}\ p \\
\text{Case 2} & c^{(2)}_{i+1}+b^{(1)}_i \equiv \frac{p+3}{2}\ {\rm mod}\ p \\
\text{Case 3} & c^{(1)}_{i+1}+b^{(2)}_i \equiv \frac{p-3}{2}\ {\rm mod}\ p \\
\text{Case 4} & c^{(2)}_{i+1}+b^{(2)}_i \equiv 0\ {\rm mod}\ p 
\end{array}
\right.
\end{equation} 
This also shows $c^{(j_{i+1})}_{i+1}+b^{(j_i)}_i\ge \min\{\frac{p-3}{2},3\}$ for 
any $1\le i\le s-1$ and $j_i,j_{i+1}\in \{1,2\}$. 
If $s\ge 2$, it has to be $s=2$ since 
$$p=\ds\sum_{i=1}^s (b^{(j_i)}_i+c^{(j_i)}_i)=
b^{(j_s)}_s+c^{(j_1)}_1+\sum_{i=1}^{s-1} (b^{(j_i)}_i+c^{(j_{i+1})}_{i+1})$$ 
and $p\ge 3$.  
Therefore, the number of low points $s$ is less than or equal to $2$. 

Now assume $f$ is a Hecke eigen form for $\mathbb{T}$ and it is non semi-ordinary in the sense of \cite[Section 6,1]{yam} in terms of 
Hecke eigenvalues at $p$ which is easily extended to the vector valued case by using the formula of \cite[p.173]{Arakawa} 
(notice that we have assumed $k_2\ge 2$ in this section so that the Hecke operators at $p$ are well defined 
(see \cite[p.10, Remark 2.1]{yam}) 

Then $p_2(w(\Theta^{\frac{p-1}{2}}f))=p_2(w(f))$. 
Hence, a jump happens at least one time and thus $s\ge 1$. Further, 
the last jump necessarily happens at $w(\Theta^{\frac{p-3}{2}}f)$ to conclude 
$w(\Theta^{\frac{p-1}{2}}f)=w(f)$. 
Further, since $w(f)$ appears in ${\rm Cyc}(f)$, the next step $w(\Theta(f))$ is 
automatically outputted in the computation below. 
This is a phenomena in the case of non semi-ordinary. 

When $s=1$, by (\ref{sumc}) and (\ref{sumb}) we have 
$$c^{(j_1)}_1=\frac{p-1}{2},\ b^{(j_1)}_1=\frac{p+1}{2},\ j_1\in\{1,2\}.$$
Put $k:=p_2(w(f))=ap+k_0$ with $a\in \Z_{\ge 0}$ and $1\le k_0\le p$. 
When $j_1=1$, 
since $p_2(w(\Theta^{c_1^{(1)}-1}f))=k+(c^{(1)}_1-1)(p+1)\equiv -1$ mod $p$, 
$k_0\equiv \frac{p+1}{2}$ mod $p$. 
Hence $k_0=\frac{p+1}{2}$. 
As a check, by formula (\ref{bi2}) 
we have $p_2(w(\Theta^{c_1^{(1)}}f))=k+(c^{(1)}_1-1)(p+1)+(p+1)-b^{(1)}_1(p-1)=k$.  
Similarly, when $j_1=2$, we have $k_0=2$. 
In either of cases, we have  
$$p_2({\rm Cyc}(f))=(k+(p+1),\ldots,\overbrace{k+\frac{(p-3)}{2}(p+1)}^{c^{(j_1)}_1-1=\frac{p-3}{2}},k),\ 
k=ap+k_0$$ 
with $k_0=\frac{p+1}{2}$ if $j_1=1$ and $k_0=2$ if $j_1=2$.

Henceforth we assume $s=2$ (this implies $p\ge 5$ since the length of the theta cycle is 
$\frac{p-1}{2}$). Since 
$$p=\ds\sum_{i=1}^2 (b^{(j_i)}_i+c^{(j_i)}_i)=(b^{(j_2)}_2+c^{(j_1)}_1)+(b^{(j_1)}_1+c^{(j_{2})}_{2})
\ge 2+(b^{(j_1)}_1+c^{(j_{2})}_{2}).$$
Combining it with the congruence relation (\ref{cong-rel}) we have 
$$(j_1,j_2)=(1,2)\ {\rm or}\ (2,1).$$
Put $k:=p_2(w(f))=ap+k_0$ with $a\in \Z_{\ge 0}$ and $1\le k_0\le p$. 
When $(j_1,j_2)=(1,2)$, we have $b^{(1)}_1+c^{(2)}_{2}=\frac{p+3}{2}$. 
Since $p_2(w(\Theta^{c_1^{(1)}-1}f))=k+(c^{(1)}_1-1)(p+1)\equiv -1$ mod $p$, 
$k_0+c^{(1)}_1\equiv 0$ mod $p$. 
It follows from $2\le k_0+c^{(1)}_1\le p+\frac{p-1}{2}<2p$ that $k_0+c^{(1)}_1=p$. 
Then we have 
$$c^{(1)}_1=p-k_0,\ c^{(2)}_2=k_0-\frac{p+1}{2},\ b^{(1)}_1=p+2-k_0,\ 
b^{(2)}_2=k_0-\frac{p+3}{2}.$$
Since these integers are positive integers, we should have 
$\frac{p+5}{2}\le k_0\le p-1$ and also $p\ge 7$. Then the theta cycle is computed 
as 
$$p_2({\rm Cyc}(f))=(k+(p+1),\ldots,\overbrace{k+(p-1-k_0)(p+1)}^{c^{(1)}_1-1=p-1-k_0},
k^{(1)}_1,k^{(1)}_1+(p+1),\ldots,\overbrace{k^{(1)}_1+(k_0-\frac{p+3}{2})(p+1)}^{c^{(2)}_2-1=k_0-\frac{p+3}{2}},k)$$
where 
$$k^{(1)}_1:=k+(c^{(1)}_1-1)(p+1)-b^{(1)}_1(p-1)=k-p+1-2k_0.$$ 

When $(j_1,j_2)=(2,1)$, we have $b^{(2)}_1+c^{(1)}_{2}=\frac{p-3}{2}$. 
A similar argument shows $k_0+c^{(2)}_1=\frac{p+3}{2}$. Hence we have 
$$c^{(2)}_1=\frac{p+3}{2}-k_0,\ c^{(1)}_2=k_0-2,\ b^{(2)}_1=\frac{p+1}{2}-k_0,\ 
 b^{(1)}_2=k_0$$
 with $3\le k_0\le \frac{p-1}{2}$ and also $p\ge 7$. 
Then the theta cycle is computed 
as 
$$p_2({\rm Cyc}(f))=(k+(p+1),\ldots,\overbrace{k+(\frac{p+1}{2}-k_0)(p+1)}^{c^{(2)}_1-1=
\frac{p+1}{2}-k_0},k^{(2)}_1,k^{(2)}_1+(p+1),\ldots,\overbrace{k^{(2)}_1+(k_0-3)(p+1)}^{c^{(1)}_2-1=
k_0-3},k)$$ 
where 
$$k^{(2)}_1:=k+(c^{(2)}_1-1)(p+1)-b^{(2)}_1(p-1)=k+p+1-2k_0.$$

Summing up, we have obtained the following result for the theta cycle:
\begin{thm}\label{r1-non-semi}Assume $(k_1,k_2)=(k+1,k)$ with $k\ge 2$.  Let $f\in M_{\uk}(N)$ satisfying the assumption in Theorem \ref{fil}. Assume $f$ is non semi-ordinary. 
Put $k:=p_2(w(f))=ap+k_0$ with $a\in \Z_{\ge 0}$ and $1\le k_0\le p$. 
\begin{enumerate}
\item 
Let $k^{(1)}_1:=k-p+1-2k_0$ and $k^{(2)}_1:=k+p+1-2k_0$.
If $p\ge 5$ and the number of low points is equal to $2$, then $p_2({\rm Cyc}(f))$ is given by    
$$
\begin{array}{ll}
(k+(p+1),\ldots,k+\frac{(p-3)}{2}(p+1),k) & 
\text{if $k_0=\frac{p+1}{2}$ with $j_1=1$ or $k_0=2$ with $j_1=2$} \\
(k+(p+1),\ldots,k+(p-1-k_0)(p+1),\\
  k^{(1)}_1,k^{(1)}_1+(p+1),\ldots,
k^{(1)}_1+(k_0-\frac{p+3}{2})(p+1),k) & \text{if $p\ge 7$ and $k_0\in [\frac{p+5}{2},p-1]$} \\
(k+(p+1),\ldots,k+(\frac{p+1}{2}-k_0)(p+1),\\
 k^{(2)}_1,k^{(2)}_1+(p+1),\ldots,k^{(2)}_1+(k_0-3)(p+1),k) & \text{if $p\ge 7$ and $k_0\in [3,\frac{p-1}{2}]$}. 
\end{array}
$$
Note that the pair $(k_0,p)$ which does not satisfy any of the conditions can not occur when 
the number of low points is equal to $2$. It is the same when $k^{(1)}_1\le 0$. 
\item If $p\ge 3$ and the number of low points is equal to $1$, 
then $$p_2({\rm Cyc}(f))=(k+(p+1),\ldots,\overbrace{k+\frac{(p-3)}{2}(p+1)}^{c^{(j_1)}_1-1=\frac{p-3}{2}},k)$$
where  $k_0=\frac{p+1}{2}$ if $j_1=1$ and $k_0=2$ if $j_1=2$. 
\item 
If $p=2$, 
$$p_2({\rm Cyc}(f))=(p_2(w(f))+2).$$
\end{enumerate}
\end{thm}
When $s=2$, the lows points happens so that 
\begin{enumerate}
\item the low point of the second kind comes after   
 the low point of the first kind in which case $(j_1,j_2)$ is $(1,2)$, and 
\item the low point of the first kind comes after   
 the low point of the second kind in which case $(j_1,j_2)$ is $(2,1)$.
\end{enumerate}
Therefore, once $w(f)$ is given, one can check which case of two happens.  
\begin{cor}\label{wt-cong-imply}
Keep the notation being in Theorem \ref{r1-non-semi}. 
If $f$ is non semi-ordinary and $p$ is odd, then $p_2(w(f))\not\equiv 1,\frac{p+3}{2},p$ if 
the number of low points is equal to $2$.
\end{cor}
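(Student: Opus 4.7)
The plan is to read the corollary off directly from the exhaustive list in Theorem~\ref{r1-non-semi}: that theorem partitions the non semi-ordinary case $(k_1,k_2)=(k+1,k)$ into three mutually exclusive branches according to the residue $k_0$ of $k=p_2(w(f))$ modulo $p$, with the convention $1\le k_0\le p$. So the task reduces to computing the union of the admissible values of $k_0$ and verifying that its complement in $\{1,\ldots,p\}$ is exactly $\{1,\frac{p+3}{2},p\}$.

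First I would tabulate the admissible $k_0$ from each branch of Theorem~\ref{r1-non-semi}: the $s=1$ branch contributes $k_0\in\{2,\frac{p+1}{2}\}$; the branch $(j_1,j_2)=(2,1)$ contributes the interval $k_0\in[3,\frac{p-1}{2}]$; and the branch $(j_1,j_2)=(1,2)$ contributes $k_0\in[\frac{p+5}{2},p-1]$. Joining these three sets gives $[2,\frac{p+1}{2}]\cup[\frac{p+5}{2},p-1]$, whose complement in $\{1,\ldots,p\}$ is precisely $\{1,\frac{p+3}{2},p\}$. Since $k_0\equiv p_2(w(f))\pmod p$ (with the representative $k_0=p$ standing for the class of $0$), the three missing residues translate into the asserted non-congruences $p_2(w(f))\not\equiv 1,\frac{p+3}{2},p\pmod p$.

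The only routine point to verify is that the small primes $p=3$ and $p=5$ are not mishandled, because the $s=2$ branches of Theorem~\ref{r1-non-semi} only appear once $p\ge 7$. In those two cases only the $s=1$ branch contributes, so the admissible set shrinks to $\{2,\frac{p+1}{2}\}$ (which is $\{2\}$ when $p=3$); a direct inspection shows that the complement in $\{1,\ldots,p\}$ is still $\{1,\frac{p+3}{2},p\}$ (with the collision $\frac{p+3}{2}=p$ when $p=3$), so the statement persists uniformly in $p$. There is no genuine obstacle: the corollary is simply the bookkeeping consequence of writing Theorem~\ref{r1-non-semi} in its contrapositive form.
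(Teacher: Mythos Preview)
Your proposal is correct and is exactly the argument the paper has in mind: the corollary carries no proof in the paper because it is meant to be read off directly from the exhaustive list of admissible $k_0$ in Theorem~\ref{r1-non-semi}, just as you do. Your handling of the small primes $p=3,5$ is also accurate.
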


Finally we discuss when $f$ is semi-ordinary. In this case 
we do not know if $w(f)$ appears in ${\rm Cyc}(f)$. 
Therefore, we need to observe the first step $w(\Theta(f))$ to start the theta cycle. 
Notice that $\Theta(f)$ is non semi-ordinary in the sense of \cite[Section 6,1]{yam}.  
This can be checked by using the formula of \cite[p.173]{Arakawa}. Thus, we plugin 
the non semi-ordinary case with 
$\Theta(f)$.  
This is a bit cumbersome part in the case of semi-ordinary. 

Let $k:=p_2(w(f))=ap+k_0$.  
Suppose $p\nmid (k_0+1)(2k_0-1)$. Then $p_2(w(\Theta(f)))=k+p+1=(a+1)p+(k_0+1)$. 
Let $g:=\Theta(f)$. Then $w(\Theta^{\frac{p-1}{2}}g)=w(g)$.
Applying  Theorem \ref{r1-non-semi}  to  $g:=\Theta(f)$ and $p_2(w(\Theta(f)))$, 
we have the following:
 \begin{thm}\label{r1-semi}Assume $(k_1,k_2)=(k+1,k)$ with $k\ge 2$.  Let $f\in M_{\uk}(N)$ satisfying the assumption in Theorem \ref{fil}. Assume $f$ is semi-ordinary. 
Put $k:=p_2(w(f))=ap+k_0$ with $a\in \Z_{\ge 0}$ and $1\le k_0\le p$. 
Let $k^{(1)}_1:=k+(p+1)-2k_0$ and $k^{(2)}_1:=k+(p+1)+2p-2k_0$. 
Suppose $p\nmid (k_0+1)(2k_0-1)$. 
If $p>2$, then $p_2({\rm Cyc}(f))$ satisfies either of the followings  
$$
\begin{array}{ll}
(k+(p+1),\ldots,k+\frac{(p-1)}{2}(p+1)) & 
\text{if $k_0=1$ or $\frac{p-1}{2}$} \\
(k+(p+1),\ldots,k+(p-2-k_0)(p+1),\\
  k^{(1)}_1,k^{(1)}_1+(p+1),\ldots,
k^{(1)}_1+(k_0-\frac{p+1}{2})(p+1)) & \text{if $p\ge 7$ and $k_0\in [\frac{p+3}{2},p-2]$} \\
(k+(p+1),\ldots,k+(\frac{p-1}{2}-k_0)(p+1),\\
 k^{(2)}_1,k^{(2)}_1+(p+1),\ldots,k^{(2)}_1+(k_0-2)(p+1)) & \text{if $p\ge 7$ and $k_0\in [2,\frac{p-3}{2}]$} \\
 (k+(p+1),k+2(p+1),\ldots,k+\frac{(p-1)}{2}(p+1))  & \text{otherwise}.
\end{array}
$$
If $p=2$, $p_2({\rm Cyc}(f))=(p_2(w(f))+2)$. 
\end{thm}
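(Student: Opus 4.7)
The strategy is to deduce Theorem \ref{r1-semi} from Theorem \ref{r1-non-semi} by applying the latter to $g := \Theta_{\uk}(f)$. First I verify that $g$ satisfies the hypotheses of Theorem \ref{r1-non-semi}. Theorem \ref{non-vanishing}\,(1) shows that $g$ is not weakly $p$-singular, while Theorem \ref{non-vanishing}\,(3), combined with the standing assumption $p \nmid (k_0+1)(2k_0-1)$, gives $g|_{S_{(0,0)}} \not\equiv 0$. Moreover, Theorem \ref{fil}(3) applied to $f$ yields
\[
w\bigl(\Theta^{(p-1)/2}(g)\bigr) \;=\; w\bigl(\Theta^{(p+1)/2}(f)\bigr) \;=\; w(\Theta f) \;=\; w(g),
\]
which is exactly the non semi-ordinary condition that licenses Theorem \ref{r1-non-semi} for $g$.

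Under the same numerical hypothesis, Theorem \ref{fil}(2) for weight $(k+1,k)$ supplies the maximal first-step jump $w(\Theta f) = w(f) + (p+1,p+1)$; in particular $k' := p_2(w(g)) = k + p + 1$. Writing $k' = a' p + k_0'$ with $1 \le k_0' \le p$, one has $k_0' = k_0 + 1$ (with $a' = a + 1$) when $k_0 < p$, and $k_0' = 1$ (with $a' = a + 2$) when $k_0 = p$. Substituting $k' = k+p+1$ and $k_0' = k_0 + 1$ into the auxiliary constants of Theorem \ref{r1-non-semi} gives $k'^{(1)}_1 = k' + 2 - 2 k_0' = k + (p+1) - 2 k_0$ and $k'^{(2)}_1 = k' + 2 + 2p - 2 k_0' = k + (p+1) + 2p - 2 k_0$, matching $k^{(1)}_1$ and $k^{(2)}_1$ of Theorem \ref{r1-semi}.

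The cycles of $f$ and $g$ are related by directly unrolling the definition \eqref{cycle}:
\[
{\rm Cyc}(f) \;=\; \bigl(w(g),\, w(\Theta g),\, \ldots,\, w\bigl(\Theta^{(p-3)/2}(g)\bigr)\bigr),
\]
while ${\rm Cyc}(g)$ has length $(p-1)/2$ with final entry $w(g)$. Hence $p_2({\rm Cyc}(f))$ is obtained by prepending $p_2(w(g))$ to $p_2({\rm Cyc}(g))$ after dropping its final entry. Plugging the explicit output of Theorem \ref{r1-non-semi} into this recipe produces the formulas of Theorem \ref{r1-semi} under the case correspondences $k_0' \in \{(p+1)/2, 2\} \leftrightarrow k_0 \in \{(p-1)/2, 1\}$, $k_0' \in [(p+5)/2, p-1] \leftrightarrow k_0 \in [(p+3)/2, p-2]$, and $k_0' \in [3, (p-1)/2] \leftrightarrow k_0 \in [2, (p-3)/2]$. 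The residual ``otherwise'' range (which contains only $k_0 = p$) is vacuous: it would force $p_2(w(g)) \equiv 1 \pmod p$, contradicting Corollary \ref{wt-cong-imply} for the non semi-ordinary $g$, so one records the default arithmetic progression. For $p = 2$, the formula $p_2({\rm Cyc}(f)) = (p_2(w(f)) + 2)$ is immediate from the specialization $w(\Theta f) = w(f) + (2,2)$ in Theorem \ref{fil}(3).

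The main obstacle is the careful bookkeeping through the case split: tracking how the first- and second-segment lengths of Theorem \ref{r1-non-semi} evolve under the prepend-and-strip operation as $k_0'$ is replaced by $k_0 = k_0' - 1$, handling the boundary $k_0 = p$ that triggers a carry in the decomposition $k + p + 1 = a'p + k_0'$, and verifying via Corollary \ref{wt-cong-imply} that this anomalous situation does not genuinely arise.
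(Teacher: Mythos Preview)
Your approach is essentially identical to the paper's: set $g=\Theta(f)$, use $p\nmid(k_0+1)(2k_0-1)$ together with Theorem~\ref{fil}(2) to get $p_2(w(g))=k+p+1$, invoke Theorem~\ref{fil}(3) to obtain $w(\Theta^{(p-1)/2}g)=w(g)$, and then read off the cycle from Theorem~\ref{r1-non-semi} applied to $g$ with $k_0'=k_0+1$. Your write-up is in fact more thorough than the paper's two-line proof: you check explicitly that $g$ inherits the hypotheses of Theorem~\ref{fil} (via Theorem~\ref{non-vanishing}(1),(3)), you spell out the prepend-and-strip relation between $\operatorname{Cyc}(f)$ and $\operatorname{Cyc}(g)$, and you observe that the residual ``otherwise'' case $k_0=p$ is vacuous by Corollary~\ref{wt-cong-imply}, a point the paper does not make explicit.
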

The remaining cases are $p|(k_0+1)$ or $p|(2k_0-1)$ when $p>2$. 
In this case, once we obtain $b\in \Z_{\ge 0}$ such that 
$p_2(w(\Theta(f)))=p_2(w(f))+(p+1)-b(p-1)$, then we may apply Theorem \ref{r1-semi} to 
$p_2(w(\Theta(f)))$. Notice $p_2(w(\Theta(f)))$ is congruent to $b$ or $\frac{p+3}{2}+b$. 
We would try to determine $b$ somewhere else. 

\section{Classical Serre weights for $p>2$}\label{CSW1} 
In this section we follow the terminology for $p$-adic Hodge theory in 
\cite[Section 1]{BGGT}.  
Assume $p>2$. For each positive integer $n$, we denote by 
$\omega_n:G_{\Q_{p^n}}\lra \bF^\times_p$ the fundamental character of level $n$ where 
$\Q_{p^n}$ is a unique unramified extension of $\Q_p$ of degree $n$. 
Note that $\omega_1=\ve$ where, by abusing notation, $\ve$ stands for 
the mod $p$ cyclotomic character of $G_{\Q_p}$.

\begin{dfn}\label{SSW} 
For a given mod $p$ Galois representation $\br:G_\Q\lra \GSp_4(\bF_p)$, 
Let ${\rm SW}(\br)$ be the subset of $\Z^2_{\ge 0}\times \Z$ consisting of 
all triples $(k_1,k_2,w)$ with $k_1\ge k_2\ge 3$ satisfying
\begin{itemize}
\item there exists a potentially diagonalizable, crystalline lift $\rho$ of $\br|_{G_{\Q_p}}$ 
which takes the values in $GSp_4$ such that $\rho$ has regular Hodge-Tate weights:
$${\rm HT}(\rho)=\{k_1+k_2-3+w,k_1-1+w,k_2-2+w,w\}.$$
\end{itemize}
\end{dfn}
 
According to Section 10 of \cite{yam2} we give an explicit description of 
the classical Serre weight $$(k_1(\br),k_2(\br),w(\br))\in {\rm SW}(\br)$$ for 
a given mod $p$ Galois representation $\br:G_\Q\lra \GSp_4(\bF_p)$. 
Put $\br_p:=\br|_{G_{\Q_p}}$. 
For a character $\ochi:G_{\Q_p}\lra \bF^\times_p$, a class of the 
Galois cohomology $H^1(G_{\Q_p},\ochi)$ is said to be 
tr\`es ramifi\'ee (ramified) if $\ochi=\ve$ and it is a tr\`es ramifi\'ee (ramified) class in the sense of Definition 9.10 of \cite{yam2}. If  a class is neither tr\`es ramifi\'ee nor ramified, 
then we say it peu ramifi\'ee.  

Henceforth the characters $\op_i:G_{\Q_p}\lra \bF^\times_p,\ 0\le i\le 2$ always mean 
finite unramified characters. 
By Proposition 7.2 of \cite{yam} we have five types of $\br_p$ whose 
images take the values in $\GSp_4(\bF_p)$:
\begin{enumerate}
\item (Borel ordinary case) 
$$\br_p\simeq \op_0\ve^c \otimes
\begin{pmatrix}
\br_1 & B \\
0_2 & \op_1\ve^{a+b}\br^\ast_1 
\end{pmatrix},\ \br_1=\begin{pmatrix}
 \op_1\ve^{a+b} & \ot_0 \\
0 & \op_2\ve^{a}  
\end{pmatrix},\ B\in H^1(\Q_p,(\op_1\ve^{a+b})^{-1}{\rm Sym}^2(\br_1));$$ 
\item (Siegel ordinary case)  
$$\br_p\simeq \op_0\ve^c\otimes
\begin{pmatrix}
 \op_2\ve^{a+b} & \ot_1 & \ot_3 \\ 
0 & \br_1& \ot_2 \\
0 & 0& 1 
\end{pmatrix}
$$
where $\br_1\simeq \op_1\otimes{\rm Ind}^{G_{\Q_p}}_{G_{\Q_{p^2}}}\omega^{b+ap}_2$ with $0\le b<a\le p$,  
$0\le c\le p-2$, and  $\op_1,\op_2$ satisfy $\op^2_1=\det(\br_1)\ve^{-(a+b)}=\op_2$; 
\item (Klingen ordinary case) 
$$\br_p\simeq 
\begin{pmatrix}
\br_1 & \ast \\ 
0_2 & \op_0\ochi_p^c \br_2
\end{pmatrix}$$ 
where $\br_1\simeq \op_1\otimes{\rm Ind}^{G_{\Q_p}}_{G_{\Q_{p^2}}}\omega^{b+ap}_2$ with $0\le b<a\le p$, $c\in \Z$ and 
$\br_2\simeq \br^\vee_1$;
\item (Endoscopic case) 
$$\br_p\simeq \br_1\oplus \br_2$$ where $\br_i\simeq \op_i\otimes  {\rm Ind}^{G_{\Q_p}}_{G_{\Q_{p^2}}}\omega^{b_i+pa_i}_2$ with 
$0\le b_i<a_i<p$ for $i=1,2$ and $\det(\br_1)=\det(\br_2)$. Further, we may assume 
$\br_1|_{I_{\Q_p}}\not\simeq \br_2|_{I_{\Q_p}}$ since otherwise it is essentially subsumed into the Klingen ordinary case. Here $I_{\Q_p}$ is the inertia subgroup of $G_{\Q_p}$.  
\item (Irreducible case) 
$$\br_p\simeq \op_0\ve^c\otimes {\rm Ind}^{G_{\Q_p}}_{G_{\Q_{p^4}}}\omega^{a}_4,\ a\in \Z$$
where $a\not\equiv 0\ {\rm mod}\ (p^4-1)/(p^i-1)$ for $i=1,2$ but 
$a\equiv 0\ {\rm mod}\ p+1$. 
\end{enumerate}  
In what follows we will define the classical Serre weights and it is defined according to 
how $\br_p$ is lifted to a crystalline (potentially diagonalizable) representation of $G_{\Q_p}$. 
The classical weights consist of a triple $(k_1(\br),k_2(\br),w(\br))$ of integers with 
$k_1(\br)\ge k_2(\br)\ge 3,\ w(\br)\in \Z$. 
\subsection{Local Galois cohomologies}
This is a preliminary to define the classical Serre weights. 
For each character $\ochi:G_{\Q_p}\lra \bF^\times_p$ and each class $\alpha\in 
H^1(\Gp,\ochi)$,   
put 
\begin{equation}\label{rchi}
r_{\ochi,\alpha}:=
\left\{\begin{array}{cl}
p-1 & \mbox{$\alpha$ is tr\`es ramifi\'ee or ramified} \\
0   & \mbox{otherwise}
\end{array}
\right.
\end{equation}
(see Definition 9.14 of \cite{yam2}). 
For integers $i,j$ and a prime $p$, we define an element of $\Z^2$ by 
$$\delta^p_{ij}:=
\left\{\begin{array}{cl}
(p-1,p-1) & \mbox{if $i=j$} \\
(0,0)   & \mbox{otherwise.}
\end{array}
\right.
$$
Let $\F$ be a finite extension of $\F_p$. 
Let $\overline{M}$ be a finite dimensional representation over $\bF$ 
of $G_{\Q_p}$ and $M$ be a lift of $\overline{M}$ to some $p$-adically integral 
ring $\O$ whose residue field is $\F$. Here $M$ is considered as an $\O[G_{\Q_p}]$-module. 
The local Galois cohomology $H^i(\Gp,M)$ is a finite $\O$-module and may have torsion 
elements. We denote by $H^i(\Gp,M)_{{\rm tf}}$ the quotient of $H^i(\Gp,M)$ by 
all torsion elements. 
Suppose $\F(\ve)\subset \bM$ as an $\F[G_{\Q_p}]$-module. Then it yields  
$H^1(G_{\Q_p},\F(\ve))\lra H^1(G_{\Q_p},\bM)$. 
Let $M^\ast:=M^\vee(1)=M^\vee(\ve)$ and $\langle \ast,\ast\rangle$ be the perfect pairing 
on $H^1(G_{\Q_p},\bM)\times H^1(G_{\Q_p},\bM^\ast)$ defined by the local Tate duality.  
The natural surjection $\bM^\ast \lra (\F(\ve))^\ast=\F$ induces 
$H^1(G_{\Q_p},\bM^\ast)\stackrel{\iota_{\bM^\ast}}{\lra} H^1(G_{\Q_p},\F)={\rm Hom}(\Q^\times_p/(\Q^\times_p)^p,\F)$. 
Let $e_{{\rm ur}}$ (resp. $e_{{\rm ram}}$) be a non-zero element in the RHS such that $e_{{\rm ur}}(p)=1$ 
and zero on the units (resp. $e_{{\rm ram}}(1+p)=1$). 
Since $p>2$, they make up a basis of $H^1(G_{\Q_p},\F)$. 
For $\spadesuit\in \{{\rm ur},\ {\rm ram}\}$, pick a lift $\widetilde{e}_\spadesuit$ of 
$e_\spadesuit$ to 
$H^1(G_{\Q_p},\bM^\ast)$ under $\iota_{\bM^\ast}$. 
If there does not exist any lift, we put $\widetilde{e}_\ast:=0$. 
For each class $\alpha$ of $H^1(G_{\Q_p},\bM)$ we define 
\begin{equation}\label{marking1}
a_{{\rm ur}}(\alpha):=\langle \alpha, \widetilde{e}_{{\rm ur}} \rangle,\ 
a_{{\rm ram}}(\alpha):=\langle \alpha, \widetilde{e}_{{\rm ram}} \rangle
\end{equation}
Similarly, if $\F\subset \bM$ as an $\F[G_{\Q_p}]$-module, then we have 
$$H^1(G_{\Q_p},\bM^\ast)\lra H^1(G_{\Q_p},\F(\ve))\simeq \Q^\times_p/(\Q^\times_p)^p$$
where the above last isomorphism is the Kummer map. 
Then we have the dual basis  $e^\ast_{{\rm ur}},\ e^\ast_{{\rm ram}}$ of 
$H^1(G_{\Q_p},\F(\ve))$. 
For each class $\alpha$ of $H^1(G_{\Q_p},\bM)$ we also define 
\begin{equation}\label{marking2}
a_{{\rm ur},\ast}(\alpha):=\langle \alpha, \widetilde{e}^\ast_{{\rm ur}} \rangle,\ 
a_{{\rm ram},\ast}(\alpha):=\langle \alpha, \widetilde{e}^\ast_{{\rm ram}} \rangle
\end{equation}
where lifts $\widetilde{e}^\ast_{{\rm ur}},\ \widetilde{e}^\ast_{{\rm ram}}$ of 
$e^\ast_{{\rm ur}},\ e^\ast_{{\rm ram}}$ to 
$H^1(G_{\Q_p},\bM^\ast)$ 
are similarly defined as above. 
\begin{dfn}\label{marking}Keep the notation being as above. Assume $p$ is odd. 
For each class $\alpha\in H^1(G_{\Q_p},\bM)$, the pair 
$(a_{{\rm ur}}(\alpha),a_{{\rm ram}}(\alpha))$ defined in $($\ref{marking1}$)$ 
is said to be the marking of $\alpha$ with respect to 
$\{\widetilde{e}_{{\rm ur}},\widetilde{e}_{{\rm ram}}\}$. 

Similarly, the pair 
$(a_{{\rm ur},\ast}(\alpha),a_{{\rm ram},\ast}(\alpha))$ defined in $($\ref{marking2}$)$ 
is said to be the marking of $\alpha$ with respect to $\{\widetilde{e}^\ast_{{\rm ur}},\widetilde{e}^\ast_{{\rm ram}}\}$. 
\end{dfn}

\subsection{Borel ordinary case}\label{bo} 
Recall $B=\begin{pmatrix}
b_2 & b_1 \\
b_3 & b_2
\end{pmatrix}$
 belongs to 
$H^1(\Q_p,(\op_1\ve^{a+b})^{-1}{\rm Sym}^2(\br_1))$ where 
\begin{equation}\label{sym2}
(\op_1\ve^{a+b})^{-1}{\rm Sym}^2(\br_1)=
\begin{pmatrix}
\op_1\ve^{a+b} & 2\ot_0 & \ot^2_0 \\
0 &   \op_2\ve^a & \op^{-1}_1\op_2 \ve^{-b}\ot_0 \\
0 & 0 & \op^{-1}_1\op^2_2 \ve^{a-b}
\end{pmatrix}.
\end{equation}
Put 
\begin{equation}\label{chi123}
\ochi_1:=\op_1\ve^{a+b} ,\ \chi_2:=\op_2\ve^a ,\ \ochi_3:=\op^{-1}_1\op^2_2 \ve^{a-b}, 
\end{equation}
$\br_3:=(\op_1\ve^{a+b})^{-1}{\rm Sym}^2(\br_1))$, and 
$\br_2:=\begin{pmatrix}
  \ochi_2 & \op^{-1}_1\op_2 \ve^{-b}\ot_0 \\
               0 & \ochi_3
\end{pmatrix}$ for simplicity. 
The exact sequence $0\lra \ochi_1\lra \br_3\lra \br_2\lra 0$
yields an exact sequence 
\begin{equation}\label{exact-galois}
H^1(G_{\Q_p},\ochi_1)\lra 
H^1(G_{\Q_p},\br_3)\lra H^1(G_{\Q_p},\br_2). 
\end{equation}
By choosing a conjugation of $\br_3$ if necessary,  we have 
a homomorphism $H^1(\Gp,\br_3)\lra H^1(\Gp,\ochi_i)$ which sends 
$B$ to $b_i$ for each $1\le i\le 3$. Regarding the usage of (\ref{rchi}), 
$b_i$ is considered in this manner whenever we involve $r_{\ochi_i,b_i}$ in the discussion 
below. 
\subsubsection{The case of the trivial extension}\label{trivial-ext} 
First we consider the case when $\ot_0=0$.  
We have $H^1(\Gp,\br_3)=\ds\bigoplus_{1\le i\le 3}H^1(\Gp,\ochi_i)$. 
By Proposition 9.12 of \cite{yam2} one can lift $b_i$ to a crystalline extension in 
$H^1(\Gp,\chi_i)$ for some crystalline lift $\chi_i$ of $\ochi_i$. 
Then we define $w(\br)=c$ and 
$$(k_1(\br),k_2(\br))=
\left\{
\begin{array}{ll}
(1,2)+(a+\ds\max_{1\le i\le 3}\{r_{\ochi_i,b_{i}}\},b)+\delta^p_{b0} & 
\text{if $a>b$} \\
(1,2)+(a+p-1,a)+\delta^p_{a0} & 
\text{if $a=b$} \\
(1,2)+(a+p-1+r_{\ochi_3,b_3},b)& 
\text{if $a<b$}.
\end{array}\right.
$$
We remark that when $a<b$, the ramified case does not happen by definition. 
\subsubsection{The case of the non-trivial peu ramifi\'ee extensions}
Let us keep the notation in the previous subsection. To define the classical Serre weights, 
we apply the argument in Section 9.3.3 in \cite{yam2}.  
In this case, since $\ot_0$ is peu ramifi\'ee  one can lift $\br_2$ to a (potentially diagonalizable) crystalline lift $\rho_{2,m_2,m_3}$
of Hodge-Tate weights $\{m_1,m_2\}$ with $m_2>m_3>0$ satisfying $(m_2,m_3)\equiv 
(a,a-b)$ modulo $p-1$. 
Let $B_2$ be the image of $B$ under (\ref{exact-galois}). 
Applying Proposition 9.16 of \cite{yam}, we first lift $B_2$ to an element $\widetilde{B}_2$ 
in $H^1(\Q_p,\rho_{2,s_1,s_2})$ by suitably choosing $\rho_{2,s_1,s_2}$ and 
also choosing an extension $\rho$ of it by a crystalline lift $\chi_1$ of $\ochi_1$ such that 
\begin{equation}\label{scalar-case}
\xymatrix{
0\ar[r]&  H^1(G_{\Q_p},\chi_1)_{{\rm tf}}\ar[r]\ar[d]^{\iota_1}&  H^1(G_{\Q_p},\rho)_{{\rm tf}}\ar[r] \ar[d]^{\iota_3}& 
  H^1(G_{\Q_p},\rho_{2,m_2,m_3})_{{\rm tf}}\ar[d]^{\iota_2}  \ar@/_25pt/[l]^{\text{lift $\widetilde{B}_2$ to $\widetilde{B}_3$}} 
  \ar[r] & 0  \\
&H^1(G_{\Q_p},\ochi_1)\ar[r]& H^1(G_{\Q_p},\br_3) \ar[r]& H^1(G_{\Q_p},\br_2)&
}
\end{equation}
where the vertical arrows mean the reduction maps. 
Note that none of them is necessarily surjective. 
When either of $\ochi_2,\chi_3$ is trivial or $\ve$, by using the notion in 
Definition \ref{marking} and Proposition 9.12 in \cite{yam2}, one can lift 
$B_2$ by suitably choosing a crystalline lift $\br_{2,m_2,m_3}$ of $\br_2$ as above.  
Let $\widetilde{B}_2$ is such a lift of $\widetilde{B}_2$ to $H^1(G_{\Q_p},\rho)_{{\rm tf}}$. 
When $\ochi_1$ is the 
trivial character or $\ve$, by using the marking in Definition \ref{marking} again and  Proposition 9.12 of \cite{yam},  one can lift $B$ to $H^1(\Gp,\rho)$ as a crystalline extension. This is also a diagonalizable representation by \cite[Lemma 1.4.3-(1)]{BGGT}.  
In fact, lifts of $\iota_3(\widetilde{B}_3)$ and $B$ differ by 
an element in the line of $H^1(G_{\Q_p},\ochi_1)$ defined 
by the marking. Since $H^1(G_{\Q_p},\chi_1)_{{\rm tf}}$ is of rank one when 
$\chi_1$ is chosen suitably and its reduction is surjective onto that line.  
This makes sure the existence of a lift. 
According to the above procedure, we can choose $(m_2,m_3)$ for $\rho_{2,m_2,m_3}$ as 
follows:
$$(m_2,m_2-m_3)=
\left\{
\begin{array}{ll}
(a+\ds\max_{2\le i\le 3}\{r_{\ochi_i,b_{i}}\},b)+\delta^p_{b0} & \text{if $a>b$} \\
(a+p-1,a) & \text{if $a=b$} \\
(a+p-1+r_{\ochi_3,b_3},b) & \text{if $a<b$}. 
\end{array}\right.
$$
Note that when $a<b$, it holds that $r_{\ochi_3,b_3}=p-1$ if and only if $(a,b)=(0,p-2)$ and $b_3$ is 
tr\`es ramifi\'ee.  

According to the above Hodge-Tate numbers, we define $w(\br)=c$ and 
$$(k_1(\br),k_2(\br))=
\left\{
\begin{array}{ll}
(1,2)+(a+\ds\max_{1\le i\le 3}\{r_{\ochi_i,b_{i}}\},b)+\delta^p_{b0} & \text{if $a>b$} \\
(1,2)+(a+p-1,a) & \text{if $a=b$} \\
(1,2)+(a+p-1+r_{\ochi_3,b_3},b) & \text{if $a<b$}. 
\end{array}\right.
$$
\subsubsection{The case of the tr\`es ramifi\'ee or ramified extensions}
In this case, since $\ot_0$ is tr\`es ramifi\'ee or ramified,  
$b=1$ or 0 respectively. Further, $r_{\ochi^{-1}_3\ochi_2,\ot_0}=p-1$ by definition and 
we keep to use $r_{\ochi^{-1}_3\ochi_2,\ot_0}$ to clarify how $\ot_0$ affects 
to the classical Serre weights. 
As in the previous case one can lift $\br_2$ to a (potentially diagonalizable) crystalline lift $\rho_{2,m_2,m_3}$
of Hodge-Tate weights $\{m_1,m_2\}$ as follows:
$$(m_2,m_2-m_3)=
\left\{
\begin{array}{ll}
(a+\ds\max_{2\le i\le 3}\{r_{\ochi_i,b_{i}}\}+r_{\ochi^{-1}_3\ochi_2,\ot_0},b+
r_{\ochi^{-1}_3\ochi_2,\ot_0})+\delta^p_{b0} & \text{if $a>b$} \\
(a+p-1+r_{\ochi^{-1}_3\ochi_2,\ot_0},a+r_{\ochi^{-1}_3\ochi_2,\ot_0})+\delta^p_{a0} & \text{if $a=b$} \\
(a+p-1+r_{\ochi_3,b_3}+r_{\ochi^{-1}_3\ochi_2,\ot_0},b+r_{\ochi^{-1}_3\ochi_2,\ot_0}) & \text{if $a<b$}. 
\end{array}\right.
$$
According to the above Hodge-Tate numbers, we define $w(\br)=c$ and 
$$
(k_1(\br),k_2(\br))=
\left\{
\begin{array}{ll}
(1,2)+(a+\ds\max_{1\le i\le 3}\{r_{\ochi_i,b_{i}}\}+r_{\ochi^{-1}_3\ochi_2,\ot_0},b+
r_{\ochi^{-1}_3\ochi_2,\ot_0})+\delta^p_{b0} & \text{if $a>b$} \\
(1,2)+(a+p-1+r_{\ochi^{-1}_3\ochi_2,\ot_0},a+r_{\ochi^{-1}_3\ochi_2,\ot_0})+\delta^p_{a0} & \text{if $a=b$} \\
(1,2)+(a+p-1+r_{\ochi_3,b_3}+r_{\ochi^{-1}_3\ochi_2,\ot_0},b+r_{\ochi^{-1}_3\ochi_2,\ot_0}) & \text{if $a<b$}. 
\end{array}\right.
$$

\subsubsection{A uniform formula}
In the case of Borel ordinary, in general, we may define 
$w(\br)=c$ and 
$$
(k_1(\br),k_2(\br))=
\left\{
\begin{array}{ll}
(1,2)+(a+\ds\max_{1\le i\le 3}\{r_{\ochi_i,b_{i}}\}+r_{\ochi^{-1}_3\ochi_2,\ot_0},b+
r_{\ochi^{-1}_3\ochi_2,\ot_0})+\delta^p_{b0} & \text{if $a>b$} \\
(1,2)+(a+p-1+r_{\ochi^{-1}_3\ochi_2,\ot_0},a+r_{\ochi^{-1}_3\ochi_2,\ot_0})+\delta^p_{a0} & \text{if $a=b$} \\
(1,2)+(a+p-1+r_{\ochi_3,b_3}+r_{\ochi^{-1}_3\ochi_2,\ot_0},b+r_{\ochi^{-1}_3\ochi_2,\ot_0}) & \text{if $a<b$}. 
\end{array}\right.
$$

\subsection{Siegel ordinary case}\label{so} 
As explained in Section 9.4.2 of \cite{yam2}, 
the extension $\ot_2$ is related to $\ot_1$ under an isomorphism between 
local Galois cohomologies. Therefore, we may consider 
$$\br_3:=
\begin{pmatrix}
 \ochi & \ot_1 \\
0 & \br_1
\end{pmatrix},\ \ochi:=\op_2\ve^{a+b}.$$
Since $\ot_1$ is liftable to a potentially diagonalizable cyrtalline extension since $\br_1$ is also liftable to a cyrtalline, 
potentially diagonalizable 
representation by the argument in Section 9.3.2 of \cite{yam2} with the main result of \cite{GLiu}. Therefore, we have only to 
consider $\ot_3$ which is controlled by a line 
in $H^1(\Gp,\ochi)$ as in the case of Borel ordinary. 
Notice that $\omega^{b+pa}_2$ does not change 
when we replace $(a,b)$ with $(a,b)+m(p-1,p-1)$ for any $m\in \Z$.  
Then, in this case, we define 
$w(\br)=c$ and 
$$
(k_1(\br),k_2(\br))=
(1,2)+(a+r_{\ochi,\ot_3},b+r_{\ochi,\ot_3})+\delta^p_{b0}. 
$$

\subsection{Klingen ordinary case}\label{ko} 
Put $\br_3:=(\op_0\ve^{c})^{-1}{\rm ad}^0(\br_1)$. 
The extension class corresponding to $\br$, say $B$, belongs to $H^1(\Gp,\br_3)$. 
Since $p>2$, $${\rm ad}^0({\rm Ind}^{G_{\Q_p}}_{G_{\Q_{p^2}}}\omega^{b+ap}_2)=\overline{\delta}\oplus
{\rm Ind}^{G_{\Q_p}}_{G_{\Q_{p^2}}}\omega^{(p-1)(a-b)}_2$$ where 
$\overline{\delta}:G_{\Q_p}\lra \bF^\times_p$ is the unramified quadratic character. 
The second component is irreducible if and only if $(p+1)\nmid 2(a-b)$. 
If $(p+1)|2(a-b)$, put $m=\ds\frac{2(a-b)}{p+1}$. 
Since $0<2(a-b)<2(p-1)$, $m=1$. 
Then 
${\rm Ind}^{G_{\Q_p}}_{G_{\Q_{p^2}}}\omega^{(p-1)(a-b)}_2=\ve^{\frac{p-1}{2}}
\oplus\overline{\delta}\ve^{\frac{p-1}{2}}$. 
Now we apply the argument in Section \ref{so} for the former case (hence, when $(p+1)|2(a-b)$) and 
in Section \ref{trivial-ext} and Section 9.3.2 of \cite{yam2} with the main result of \cite{GLiu} for the latter case respectively. 

Put $\ochi=(\op_0\ve^{c})^{-1}\overline{\delta}$ if $(p+1)\nmid 2(a-b)$ and 
we denote by $\ot$ the extension class corresponding to $B$ under 
the projection $H^1(\Gp,\br_3)\lra H^1(\Gp,\ochi)$ defined by the decomposition of 
${\rm ad}^0(\br_1)$. 
In this case we define $(k_1(\br),k_2(\br),w(\br))$ by 
$$
\left\{
\begin{array}{ll}
(1,2,0)+(a+b-c+r_{\ochi,\ot},a-b,c-r_{\ochi,\ot}-a) & \text{if $2b>c-r_{\ochi,\ot}$} \\
(1,2,0)+(a+b+2(p-1)-c+r_{\ochi,\ot},a-b,c-r_{\ochi,\ot}-a-(p-1)) & \text{if $2b\le c-r_{\ochi,\ot}$}. 
\end{array}\right.
$$
When $(p+1)|2(a-b)$, hence $a-b=\frac{p+1}{2}$, put 
$$\ochi_1=(\op_0\ve^{c})^{-1}\overline{\delta},\ 
\ochi_2=(\op_0\ve^{c})^{-1}\ve^{\frac{p-1}{2}},\ \ochi_3=
(\op_0\ve^{c})^{-1}\overline{\delta}\ve^{\frac{p-1}{2}}.$$
For each $1\le i\le 3$, we denote by $\ot_i$ the extension class corresponding to $B$ under 
the projection $H^1(\Gp,\br_3)\lra H^1(\Gp,\ochi_i)$ defined by the decomposition of 
${\rm ad}^0(\br_1)$ as before. 
Put $$r_{\br_3,B}:=\ds\max_{1\le i\le 3}\{r_{\ochi_i,b_{i}}\}$$ for simplicity. 
Then we define $(k_1(\br),k_2(\br),w(\br))$ by 
$$
\left\{
\begin{array}{ll}
(1,2,0)+(a+b-c+r_{\br_3,B},a-b,c-r_{\br_3,B}-a) & \text{if $2b>c-r_{\br_3,B}$} \\
(1,2,0)+(a+b+2(p-1)-c+r_{\br_3,B},a-b,c-r_{\br_3,B}-a-(p-1)) & \text{if $2b\le c-r_{\br_3,B}$}. 
\end{array}\right.
$$

\subsection{Endoscopic case}
We may assume $a_1+b_1\ge a_2+b_2$. 
Since $\overline{\chi}_p^{a_1+b_1}|_{I_{\Q_p}}=\det(\br_1)|_{I_{\Q_p}}=\det(\br_2)|_{I_{\Q_p}}=\overline{\chi}_p^{a_2+b_2}|_{I_{\Q_p}}$, 
we have $a_1+b_1\equiv a_2+b_2$ mod $p-1$. Thus, $a_1+b_1=a_2+b_2$ or $a_1+b_1=a_2+b_2+p-1$ since $2\le a_i+b_i\le 2p-3$. 

Let us first consider the case $a_1+b_1=a_2+b_2$. 
Since $\br_1|_{I_{\Q_p}}\not\simeq \br_2|_{I_{\Q_p}}$, $a_1\neq  a_2$ and $b_1\neq b_2$.  
If $a_1>a_2$, then $b_1<b_2$. Thus, $a_1>a_2>b_2>b_1$. 
By the argument in \cite[Section 9.3.2]{yam2} one can construct a potentially diagonalizable, crystalline lift $\rho_i$ of $\br_i$ 
such that $HT(\rho_i)=(a_i,b_i)$. In this case, we define 
$$(k_1(\br),k_2(\br),w(\br))=(a_2-b_1+1,b_2-b_1+2,b_1).$$
The case when $a_1<a_2$ (and hence $b_1>b_2$) is similarly handled and 
the classical Serre weights are defined as follows: 
$$(k_1(\br),k_2(\br),w(\br))=(a_1-b_2+1,b_1-b_2+2, b_2).$$

Now, we consider the case $a_1+b_1=a_2+b_2+p-1>a_2+b_2$.  
In this case, we can easily see that $a_1,b_1,a_2,b_2$ are distinct each other since 
$0\le b_i<a_i<p$. Let $A_1>A_2>B_2>B_1$ such that $\{A_1,A_2,B_1,B_2\}=\{a_1,b_1,a_2,b_2\}$. 
Then we define 
$$(k_1(\br),k_2(\br),w(\br))=(A_2-B_1+1,B_2-B_1+2,B_1).$$

\subsection{Irreducible case}
We require a development of the integral $p$-adic Hodge theory to handle this case 
for any $p$.   
However, by \cite[p.176, Theorem 6.4.4-4. and p.177, Remark 6.4.5]{EG},  
there exists a $(k_1,k_2,w)\in {\rm SW}(\br)$ such that 
$k_1+k_2-3\le 4p-1$.  
Thus, we can define $(k_1(\br),k_2(\br),w(\br))$ to be a minimum element of  
${\rm SW}(\br)$ with respect to the lexicographic order in the first and second coordinates 
(so that we first compare the second coordinate). 
 
\section{Classical Serre weights for $p=2$}\label{CSW2}
In this case $\ve=\textbf{1}$ and $H^1(G_{\Q_2},\F)\simeq 
{\rm Hom}(\Q^\times_2/(\Q^\times_2)^2,\F)$ is generated by three classes.
By using these classes, we make markings as in Definition \ref{marking} 
to lift extension classes. 
Then the same argument mostly works and the classical 
Serre weights similarly are defined except for 
the case of Klingen. In this case, in the notation of Section \ref{ko}, 
$$0\lra (\br_1)^{(2)} \lra \br_3\lra \F(\op^2_1\op^{-1}_0)\lra 0$$
where $(\br_1)^{(2)}$ stands for the Frobenius twist defined by 
$2$-th power map. Even though this sequence is non-split,  
applying Proposition 9.15 of \cite{yam2}, $\br_3$ is liftable to 
a (potentially diagonalizable) crystalline lift. 
Put $\ochi=\op^2_1\op^{-1}_0$. 
We denote by $\ot$ the extension class corresponding to $B$ under 
the projection $H^1(\Gp,\br_3)\lra H^1(\Gp,\ochi)$ defined by the above sequence. 
Notice that ${\rm Ind}^{G_{\Q_2}}_{G_{\Q_{2^2}}}\omega_2\simeq 
{\rm Ind}^{G_{\Q_2}}_{G_{\Q_{2^2}}}\omega^{2}_2$. 
Therefore, we may assume $(a,b)=(1,0)$. 
In this case we define $(k_1(\br),k_2(\br),w(\br))$ as in the former case of 
Section \ref{ko} by substituting $(a,b)=(1,0)$.  

\section{A proof and a supplemental result}\label{proof-main2}\label{proof}
\subsection{A proof of Theorem \ref{main2}}
By the construction in Section \ref{CSW1}, one can take a 
potentially diagonalizable, crystalline lift  of $\br|_{G_{\Q_p}}$ with regular Hodge-Tate weights 
$$\{k_1(\br)+k_2(\br)-3+w(\br),k_1(\br)-1+w(\br),k_2(\br)-2+w(\br)
,w(\br)\}$$
satisfying $k_1(\br)\ge k_2(\br)\ge 3$. 
Applying Theorem 1.5 of \cite{yam2}, we have the claim. 
The remaining part for the theta cycle 
follows from the contents in Section \ref{the-theta-cycle}. 

\subsection{A potentially diagonalizable lift of the minimal Hodge-Tate weights}
If $\br|_{\Gp}$ is irreducible, then by Lemma 2.1.12 of \cite{GHTS}, 
it has, up to the twist by a power of $p$-adic cyclotomic character, a potentially diagonalizable lift of Hodge-Tate weight $\{0,1,2,3\}$. 
Combining it with Theorem 1.5 of \cite{yam2} we have the following:
\begin{thm}\label{main3}
Let $\br:G_\Q\lra \GSp_4(\bF_p)$ be a mod $p$ Galois representation satisfying 
\begin{itemize}
\item $p\ge 3$; 
\item $\br|_{G_{\Q(\zeta_p)}}$ is irreducible and ${\rm Im}(\br)$ is adequate; 
\item $\br|_{\Gp}$ is irreducible; 
\item $\br\sim \br_{f,p}$ for some cuspidal Hecke eigenform $f$ in $M_{\uk'}(N)$ 
with some weight $\uk'=(k'_1,k'_2),\ k'_1\ge k'_2\ge 3$.  
\end{itemize}
Then there exist a cuspidal Hecke eigen form $g$ unramified outside $pN$ 
of weight $(3,3)$ and an integer $w(\br)$ 
such that $\br\simeq \ochi^{w(\br)}_p\otimes\br_{g,p}$ and 
$\rho_{g,p}|_{\Gp}$ is potentially diagonalizable. 
\end{thm}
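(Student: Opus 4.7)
The plan is to parallel the proof of Theorem \ref{main2}, but with the delicate Hodge--Tate type coming from the classical Serre weights $(k_1(\br),k_2(\br),w(\br))$ of Section \ref{CSW1} replaced by the universal minimal type $\{0,1,2,3\}$, which is always available for irreducible $\br|_{\Gp}$.

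First I would apply Lemma 2.1.12 of \cite{GHTS} to the local restriction $\br|_{\Gp}$. Since $\br|_{\Gp}$ is irreducible, this lemma supplies an integer $w(\br)\in \Z$ together with a potentially diagonalizable crystalline lift of $\br|_{\Gp}\otimes \ochi_p^{-w(\br)}$ of Hodge--Tate weights $\{0,1,2,3\}$. I would then observe that this multiset agrees with $\{k_1+k_2-3+w,\ k_1-1+w,\ k_2-2+w,\ w\}$ (the Hodge--Tate type associated to the weight $(k_1,k_2,w)$ in Section \ref{CSW1}) exactly when $(k_1,k_2,w)=(3,3,0)$, so the local lift matches the Hodge--Tate type of a Siegel modular form of weight $(3,3)$.

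Next I would invoke the automorphy lifting theorem, Theorem 1.5 of \cite{yam2}, applied to the twist $\br\otimes \ochi_p^{-w(\br)}$. The global hypotheses transfer from $\br$ to the cyclotomic twist: $\br|_{G_{\Q(\zeta_p)}}$ is irreducible, ${\rm Im}(\br)$ is adequate, and residual automorphy is provided by $f$ (whose Hecke eigensystem at primes away from $pN$ determines the twisted one after tracking through the Galois representation). The local condition at $p$ is filled in by the potentially diagonalizable crystalline lift constructed in the first step. The theorem then produces a cuspidal Hecke eigenform $g$ of weight $(3,3)$, unramified outside $pN$, satisfying $\br_{g,p}\sim \br\otimes \ochi_p^{-w(\br)}$, equivalently $\br\sim \ochi_p^{w(\br)}\otimes \br_{g,p}$; the final clause that $\rho_{g,p}|_{\Gp}$ is potentially diagonalizable is built in through the chosen local type.

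The main obstacle, as in Theorem \ref{main2}, lies in verifying that the hypotheses of Theorem 1.5 of \cite{yam2} are genuinely satisfied after the twist by $\ochi_p^{-w(\br)}$: namely that the minimal Hodge--Tate type $\{0,1,2,3\}$ falls in the allowed range of that theorem, and that the local deformation conditions at primes dividing $N$ (inherited from $f$) are compatible with switching to weight $(3,3)$. Compared to the proof of Theorem \ref{main2}, the situation here is actually simpler, since no delicate analysis of the Serre weight recipe of Section \ref{CSW1} is needed -- a single universal choice of Hodge--Tate weights suffices, at the price of allowing arbitrary ramification at $p$ (hence ``$g$ unramified outside $pN$'' rather than ``outside $N$'').
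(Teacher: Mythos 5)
Your proposal matches the paper's own argument: the paper likewise obtains, via Lemma 2.1.12 of \cite{GHTS}, a potentially diagonalizable lift of $\br|_{\Gp}$ with Hodge--Tate weights $\{0,1,2,3\}$ up to a cyclotomic twist (corresponding to weight $(3,3)$), and then concludes by applying Theorem 1.5 of \cite{yam2}. Your additional remarks on verifying the hypotheses after twisting are a reasonable elaboration of the same route, so this is essentially the paper's proof.
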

This would suggest to study the mod $p$ reductions of potentially crystalline representations of the minimal regular Hodge-Tate weights $\{0,1,2,3\}$ as it is done for $GL_3$ in 
\cite{Park},\cite{LLLM}.

\end{document}